\documentclass{aims}
\usepackage{amsmath,amssymb,bm}
\usepackage{multirow,booktabs,lineno}
\usepackage{paralist}
\usepackage{xcolor}
\usepackage{color}
\usepackage{graphics} 
\usepackage{epsfig} 
\usepackage{graphicx,subfigure}
\usepackage[colorlinks=true]{hyperref}
\hypersetup{urlcolor=blue, citecolor=red}
\usepackage{multirow}
\setcounter{secnumdepth}{8}
\setcounter{tocdepth}{5}

\textheight=8.2 true in
\textwidth=5.0 true in
\topmargin 30pt
\setcounter{page}{1}




\newtheorem{theorem}{Theorem}[section]

\newtheorem{lemma}[theorem]{Lemma}

\theoremstyle{definition}
\newtheorem{definition}[theorem]{Definition}

\title[PIGNN] 
{Combining physics-informed graph neural network and finite difference for solving forward and inverse spatiotemporal PDEs}

\thanks{* Corresponding author: wangly\_xjtu@163.com }

\begin{document}

\maketitle

\centerline{\scshape Hao Zhang, Longxiang Jiang, Xinkun Chu, Yong Wen,}
\footnote{Emails: linusec@163.com,\,\, whutjlx@163.com,\,\, neocosmos@163.com,

\qquad\qquad wenyong119321@163.com,\,\, m13368289607\_1@163.com,\,\, xiao\_yonghao@163.com.}
\bigskip
\centerline{\scshape Luxiong Li, Yonghao Xiao, Liyuan Wang$^*$}

{\small \centerline{Institute of Computer Application, China Academy of Engineering Physics, }
\centerline{Mianyang, Sichuan, 621000, P. R. China.}}
\medskip

\begin{abstract}
The great success of Physics-Informed Neural Networks (PINN) in solving partial differential equations (PDEs) has significantly advanced our simulation and understanding of complex physical systems in science and engineering. However, many PINN-like methods are poorly scalable and are limited to in-sample scenarios. To address these challenges, this work proposes a novel discrete approach termed Physics-Informed Graph Neural Network (PIGNN) to solve forward and inverse nonlinear PDEs. In particular, our approach seamlessly integrates the strength of graph neural networks (GNN), physical equations and finite difference to approximate solutions of physical systems. Our approach is compared with the PINN baseline on three well-known nonlinear PDEs (heat, Burgers and FitzHugh-Nagumo). We demonstrate the excellent performance of the proposed method to work with irregular meshes, longer time steps, arbitrary spatial resolutions, varying initial conditions (ICs) and boundary conditions (BCs) by conducting extensive numerical experiments. Numerical results also illustrate the superiority of our approach in terms of accuracy, time extrapolability, generalizability and scalability. The main advantage of our approach is that models trained in small domains with simple settings have excellent fitting capabilities and can be directly applied to more complex situations in large domains.
\end{abstract}

\bigskip

\noindent{\textbf{Keywords:}}\,\,physics-informed graph neural network, finite difference,
	nonlinear partial differential equations
\bigskip

\section{Introduction}\label{sec1}
\noindent{\textbf{\\}}

PDEs are indispensable mathematical tools to describe the behavior of complex dynamical systems under varying conditions in multifarious science and engineering disciplines, such as thermodynamics, fluid mechanics, biochemistry, electronics, and materialogy. By studying PDEs, we can learn the evolutionary patterns of corresponding systems, in the meantime, understand the influence of key parameters of PDEs on the evolution process.

In scientific computing, there are three main issues related to PDE research: forward problem (solving PDEs), inverse problem and equation discovering problem. In forward problems, governing equations are known, we need to find analytical or numerical solutions using conventional numerical methods or other surrogate methods. The inverse problem represents using observations to infer unknown parameters of the physical system, i.e., information of the governing equation is partially known. While discovering governing equation refers to the situation that the dynamical system has no analytic expressions, fortunately in these cases, observation data is abundant, and one needs to determine the causal factors or the potential governing equation from these observations. This paper focusses on the first two problems.

Except for a few simple PDEs that have analytical solutions, most of PDE problems need to be solved numerically. Decades of researches and efforts have been devoted to the investigation and development of numerical methods. At present, the mainstream numerical approaches include finite difference, finite element and finite volume method. Although the increasing computational power and capacity enable the computation of complex problems, numerical methods still suffer from enormous computational costs: long time consumption and resource burden. This issue becomes even more serious when solving complex inverse problems.

Recently machine learning (ML), especially deep learning (DL) technology, emerges as a promising direction for solving scientific problems, such as exploring physical systems that are not yet fully understood and accelerating simulation process, due to its powerful feature extraction ability and computation efficiency. Nevertheless, because of their black-box nature, applying DL model directly is insufficient to solve complex scientific problems. The research community is beginning to integrate the scientific knowledge and DL models for the purpose of alleviating the shortage of training data, increasing models' generalizability and ensuring the physical plausibility of results.

In the past few years, integrating scientific knowledge with DL models has been a research boom. The primitive idea can be traced back to Raissi et al. \cite{Raissi2019}. They proposed a novel framework of neural network named PINN, which successfully integrates the underlying physical law described by PDEs with fully connected (FC) networks. This work lays a solid foundation for solving both forward and inverse PDEs and has inspired plenty of extended studies. The general principle of PINN is to take the physical information as constraint and incorporate it into the loss function to steer the deep model to output physically consistent predictions. The popularity of PINN is mainly attributed to three aspects: the flexibility of meshless, the simplicity of the neural network, and the automatic differentiation technique. PINN achieves great success in many applications fields, including fluid mechanics \cite{Cai2022, Wong2021, He2020, Mao2020}, medical diagnosis \cite{Arzani2021, Sahli2020}, materials science \cite{Yin2021, ZhangYin2020}, engineering mechanics \cite{Haghighat2021, ZhangLiu2020}. Despite the success of PINN, it still suffers from some limitations. On the one hand, PINN lacks the ability to generalize to out-of-sample scenarios, such as varying time scales, spatial resolutions and computational domains. On the other hand, PINN requires a large number of collocation points to calculate PDE residuals, which can lead to a huge training cost, especially in high-dimensional space. Based on PINN, a lot of extension works have been done to improve the performance of PINN, such as synthesizing with numerical methods \cite{Kharazmi2019}, defining problem-specific activation functions \cite{Sitzmann2020}, optimizing the sampling of collocation points \cite{Mao2020, Wight2020, Nabian2021}, and balancing residuals with adaptive weights \cite{Kharazmi2019, Wight2020, Pang2019, McClenny2020, Xiang2021}, configuring special neural network frameworks \cite{Gao2021, Rodriguez2021, Ren2022}. 

In addition to physics-based models, data-driven methods \cite{Zhang2019, Chen2021, Meng2022} also seek nonlinear mappings from parametric DNNs to numerical solutions. In 2020, a Google research team \cite{Sanchez2020} applied GNN to particle simulation, where dynamics between particles are simulated through a message passing mechanism. Although this method achieves the best simulation accuracy at the time, its computational efficiency is low due to its neighborhood calculation method. Later, this team \cite{Pfaff2020} further expanded their work by applying GNNs to mesh-based simulations and proposed the MeshGraphNets network. By evaluating a wide range of physical systems, this method was shown to predict dynamics of underlying systems accurately and efficiently, and was capable of learning resolution adaptive dynamics and scalable to more complex spaces.

As mentioned above, based on existing advances in the physics-informed DL community, the goal of this paper is to construct a surrogate model for solving spatiotemporal PDEs in irregular domains. This model learns mesh-based solutions of PDEs with a few or no pre-computed labeled data by integrating physical information into GNN with acceptable accuracy and generalizability while substantially reducing the computational cost. We are interested in dealing with unstructured mechanisms rather than regular grids, as in practice, the computational domain of complex physical systems is usually irregular. Mesh is a powerful tool for appropriately representing irregular geometry, and employing mesh representation to solve PDEs supports adaptive resolutions and achieves a favorable tradeoff between accuracy and efficiency. Based on the mesh generated by numerical methods, by establishing an one-to-one mapping between mesh points and graph nodes, we can describe unstructured mesh data with graphs. Therefore, we naturally use GNNs instead of other neural networks. GNN can not only model the structure between nodes, but also perform local information mining and relational reasoning based on the message passing mechanisms, which implicitly introduces inductive biases that is conductive to the learning process.

In a word, we propose a novel PIGNN framework, combined with a finite-difference-based method for computing differential operators on the graph, so as to accurately and effectively solve the forward and inverse PDEs in mesh space. This work provides an important step towards advancing computation of solving PDEs. Compared to the existing literature, our main contributions are as follows:

1) We introduce a novel PIGNN framework to efficiently solve the forward and inverse PDEs. For the forward problem, the computation of physics-informed loss requires no labeled data. While for the inverse problem, regularizations constrained by underlying PDEs and few labeled data narrow the search space of parameters, which makes it possible for the model to rapidly learn unknown parameters and solutions with ensured physical consistency during optimization.

2) We propose a finite-difference-based method combined with the ordinary least square technique to approximate the differential operators defined on the graph. At the same time, we theoretically discuss the approximation error of differential operators.

3) The proposed PIGNN is directly constructed from the unstructured mesh, providing the ability to handle irregular domains. Capitalizing on the powerful mesh representation and learning with certain resolution, our method can generalize to scenarios with refined or coarse resolution (i.e., other discretization schemes) not seen during training.

4) We present general rules of selecting appropriate features for nodes and edges, which are found to be an important factor in enabling the model to better generalize to out-of-sample scenarios.

5) A series of numerical experiments are conducted on several typical PDEs to quantitatively evaluate our framework. Numerical results show that our method outperforms the baseline model in terms of accuracy, time extrapolation, generalization and scalability. Most importantly, our model trained in small domains with simple settings has excellent fitting ability, and can be directly applied to complex settings with large domains, which further sheds lights on the superiority of the proposed PIGNN framework.

6) Compared with \cite{Jiang2023}, the main difference of this paper is that we apply PIGNN to inverse problems and extend PIGNN to distributed multi-GPU systems to solve large-scale problems, so as to show the scalability of PIGNN. Meantime, the theoretical error estimation of differential operators is presented.

The rest of this paper is organized as follows. In Section 2, we elaborate on the PIGNN framework, including the problem setup, model architecture, physics-informed loss function and general rules of selecting features. Section 3 illustrates some computational issues defined on the graph. We first present the estimation method for differential operators on the mesh, then state the interpolation method involved in the inverse problem. Section 4 provides some numerical experiments to illustrate the outstanding performance of our PIGNN. Finally, Section 5 concludes the paper.\vspace{2mm}

\section{PIGNN Framework}\label{sec2}
\noindent{\textbf{\\}}
\subsection{Problem setup}\label{subsec2.1}
\noindent{\textbf{\\}}

We consider a dynamical system in a bounded domain $\Omega\subset{\mathbb{R}}^{d}$ governed by a system of nonlinear PDEs of the following general form,
\begin{eqnarray} \label{Eq2.1}
\frac{\partial\, \bm{{\rm u}}(\bm{{\rm x}},t)}{\partial t}
+ {\mathcal F}\big[\bm{{\rm u}}, \nabla_{\bm{{\rm x}}}\bm{{\rm u}},
    \Delta_{\bm{{\rm x}}}\bm{{\rm u}}, \cdots; \bm{\lambda}\big]
= \bm{{\rm f}}(\bm{{\rm x}},t),
&&
    \bm{{\rm x}}\in\Omega, \,t\in{\mathbb T}=[0,T],   \label{Eq2.1} \\
{\mathcal B}\big(\bm{{\rm u}}(\bm{{\rm x}},t)\big) = b(\bm{{\rm x}},t),
&&
    \bm{{\rm x}}\in\partial \Omega, \,t\in{\mathbb T},  \label{Eq2.2} \\
{\mathcal I}\big(\bm{{\rm u}}(\bm{{\rm x}},0) \big) = i(\bm{{\rm x}}),
&&
\bm{{\rm x}}\in \Omega, \,t=0.\label{Eq2.3}
\end{eqnarray}
\noindent where the system state $\bm{{\rm u}}(\bm{{\rm x}},t)\in{\mathbb{R}}^{n}$ denotes the solution of PDEs evolved over time horizon $t\in{\mathbb T}$ and spatial locations $\bm{{\rm x}}\in\Omega$, $T$ is the terminal time, $\frac{\partial \bm{{\rm u}}}{\partial t}$ is the first-order time derivative term, ${\mathcal{F}}[\cdot\,; \bm{\lambda}]$ is a complex nonlinear functional of state variable $\bm{{\rm u}}(\bm{{\rm x}},t)$ and its spatial derivatives, parameterized by $\bm{\lambda}$. Here, $\nabla$ and $\Delta$ denote the gradient and Laplace operators, respectively. $\bm{{\rm f}}(\bm{{\rm x}},t)$ is the source term ($\bm{{\rm f}}=\bm{0}$ represents no extra source input to the system). In addition, the system of PDEs is subjected to BCs and ICs, defined by Eq. \eqref{Eq2.2}-\eqref{Eq2.3}, where $\partial \Omega$ denotes the boundary of spatial domain. Such PDEs describe a wide range of propagative physical systems, including heat dissipation, disease progression, cellular kinetics, fluid dynamics and so on.

In this paper, our objective is to propose an innovative PIGNN paradigm to establish a solution method for the above PDE system in both forward and inverse settings. In the forward setting, given specific ICs, BCs and PDE parameter $\bm{\lambda}$, we aim to find solutions for PDEs. The physics-informed loss function is constructed based on the residuals of PDEs, and the overall learning process is unsupervised. While for the inverse problem, ICs and BCs are given, $\bm{\lambda}$ is unknown, but a few observations of the system state are available. What we need to do is to determine the system response as well as identify the unknown parameter $\bm{\lambda}$ that best fits the observations. The working mechanism of PIGNN is shown in Fig. \ref{fig1}. In general, we can interpret PIGNN as a neural solver for PDEs. The prior knowledge of physics is incorporated into PIGNN through specially designed loss functions to capture the dynamic patterns.

\begin{figure}[!htbp]
\begin{center}
\scalebox{0.49}[0.5]{\includegraphics{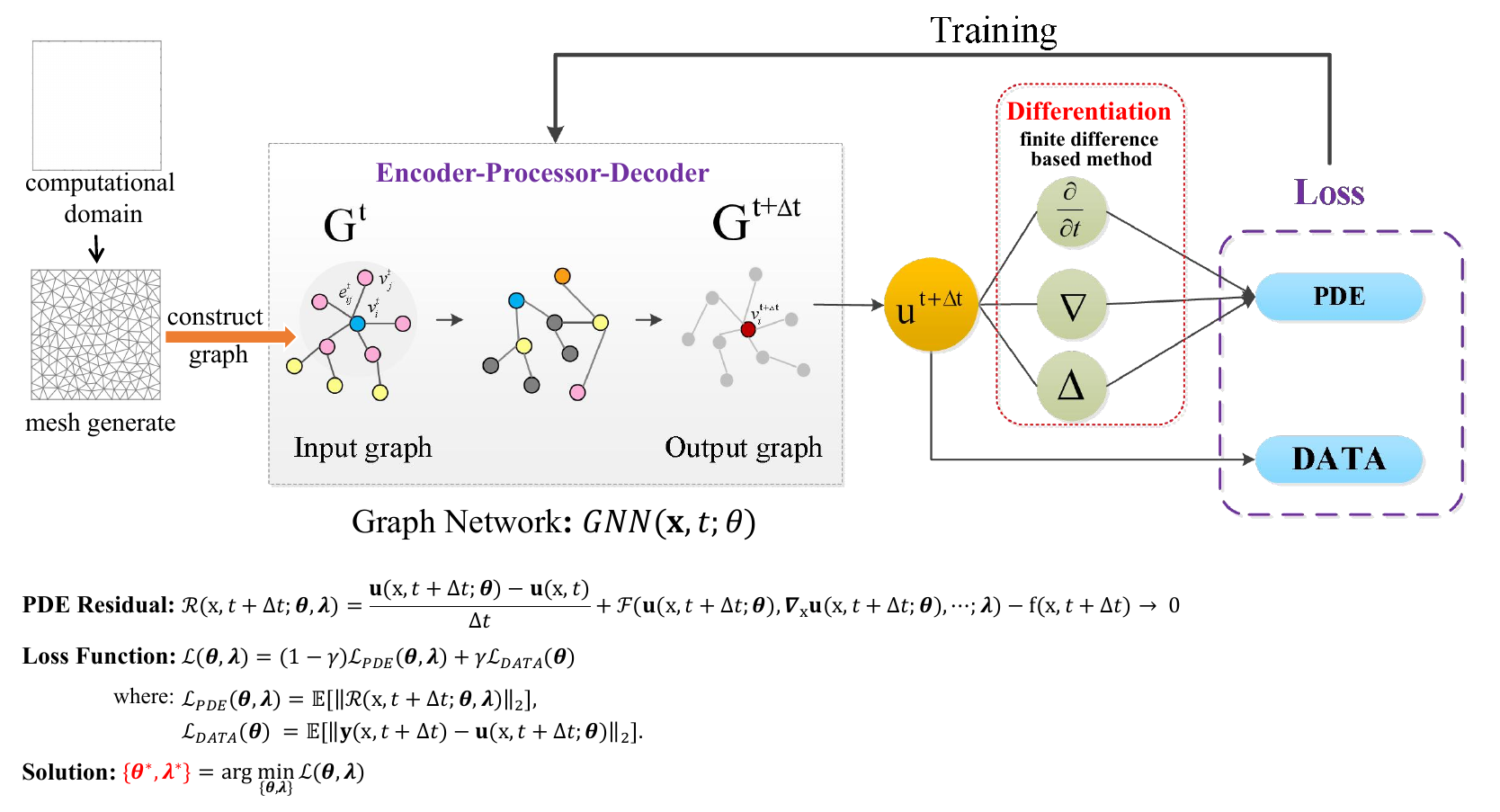}}
\caption{\footnotesize{The schematic diagram of PIGNN for solving forward and inverse problems.
Unstructured meshes are generated using numerical methods. The Encoder component of the network constructs the input graph from mesh data, Processor updates and transmits the information, and Decoder outputs PDE solutions. By applying the finite-difference-based method, we compute all differential terms and incorporate prior knowledge of PDEs into the network via the specific PDE loss.
}}
\label{fig1}
\end{center}
\end{figure}

Simulation of PDE dynamics relies on appropriate spatial-temporal discretizations. Our first step is to discretize the computational domain $\Omega$ by constructing a simulation mesh on the domain surface, which can be implemented by mature mesh generation algorithms, such as Delaunay triangulation, advancing front method, and mapping method. At time $t$, let $\mathcal M^t=(\mathcal V, \mathcal E^{M})$ be an unstructured mesh with mesh nodes $\mathcal V$ connected by mesh edges $\mathcal E^{M}$. $N=\|\mathcal V\|$ denotes the number of mesh nodes. Each node $i\in \mathcal V$ is associated with a coordinate $\bm{{\rm x}}_i$ in the mesh space and a feature vector $\bm{f}_i$ describing the system state. By constructing an one-to-one mapping between mesh nodes and graph nodes, the unstructured mesh can be naturally represented as a graph. Let $G^{t}=(V, E^{G})$ be the undirected graph, where $V=\{\bm{{\rm x}}_i\}_{i=1}^{N}$ is the set of graph nodes, and the coordinate of the $i$-th node on the graph is still represented by its mesh space coordinate because of the one-to-one mapping. $E^{G}=\{\bm {e}_{ij}\}_{i,j\in V}$ denotes the set of graph edges, where the directional edge $\bm{e}_{ij}$ stands for the connection from node $i$ (sender) to node $j$ (receiver). Each node and edge is associated with a node feature vector and an edge feature vector, separately. By abuse of notation, we use the same symbols for edge features $E^G = \{\bm{e}_{ij}\}_{i,j\in V}$ and node features $V = \{\bm{v}_i\}_{i=1}^N$ on the graph. $\bm{e}_{ij}=\{\bm{{\rm x}}_{ij}, \|\bm{{\rm x}}_{ij}\|\}$ represents the edge feature vector, which in our setting contains the relative displacement vector between node $i$ and $j$: $\bm{{\rm x}}_{ij} = \bm{{\rm x}}_i-\bm{{\rm x}}_j$ and its Euclidean distance $\|\bm{{\rm x}}_{ij}\|$. $\bm{v}_i$ denotes the feature vector of the $i$-th node, including the learned node solution $\bm{{\rm u}}(\bm{{\rm x}},t)$, a 2-dimensional one-hot vector indicating the node type, and other physical variables providing additional spatiotemporal information. So far, solutions of the PDE system have been successfully discretized by these graph nodes. Let $\bm {{\rm u}}(\bm{{\rm x}}_i,t)$ be the $i$-th node solution of PDEs at time $t$. Our goal now is to find node solutions to the PDE system Eq. \eqref{Eq2.1}-\eqref{Eq2.3} on the graph for a time range after a certain initial time.

\subsection{Model architecture}\label{subsec2.2}
\noindent{\textbf{\\}}

Note that the current time $t$ is equal to the current time step multiplied by the time interval. We use the term time step to represent the time variable in the rest of this article for convenience. Assuming that $F: {\mathbb{R}}^{n}\rightarrow {\mathbb{R}}^{n}$ models the dynamics of solutions from the current time step $t$ to the next time step $t+\Delta t$, where $\Delta t$ is the time interval. In other words, at each time step, mesh-based solutions of system Eq. \eqref{Eq2.1}-\eqref{Eq2.3} are computed iteratively in a rollout manner by, $\bm {{\rm u}}(\bm{{\rm x}}_i, t+\Delta t)=F\big(\bm {{\rm u}}(\bm{{\rm x}}_i, t)\big)$. Our task then is, given the graph at time step $t: G^{t}$, to learn a model that determines dynamic quantities of the graph at the next time step $t+\Delta t$. We consider a general type of GNN proposed by Gilmer et al. \cite{Gilmer2017} to model variations of solutions from the previous time step to the consequent time step, namely, $F\big(\bm {{\rm u}}(\bm{{\rm x}}_i, t)\big)=\bm {{\rm u}}(\bm{{\rm x}}_i, t)+ GNN(\cdot; {\bm\theta})$, where ${\bm\theta}$ denotes the parameters of GNN. Our PIGNN adopts the same Encoder-Processor-Decoder architecture as that in Sanchez-Gonzalez et al. \cite{Sanchez2020, Pfaff2020}, which is shown in Fig. \ref{fig2}.
\begin{figure}[!htbp] 
\begin{center}
\scalebox{0.78}[0.78]{\includegraphics{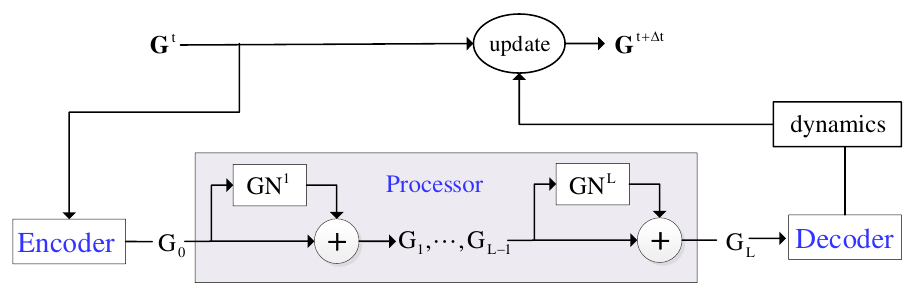}}
\caption{\footnotesize{The network architecture of PIGNN.}}
\label{fig2}
\end{center}
\end{figure}

\begin{enumerate}
	\item
	ENCODER.
\medskip

	Encoder converts the unstructured mesh at time step $t$ into a latent graph $G_0^t = {\rm ENCODER}\big({\rm u}(\bm{{\rm x}},t)\big)$. To achieve spatial equivalence, mesh nodes become graph nodes $V=\{\bm{{\rm x}}_i\}_{i=1}^{N}$, and mesh edges become bidirectional edges $E^G=\{\bm{e}_{ij}\}$ of the graph. As mentioned above, each node and edge is associated with a node feature $\bm{v}_i$ and an edge feature $\bm{e}_{ij}$, and their settings are the same as before. The node/edge features are concatenated as input node/edge features, which are then transformed into high-level features using several multilayer perceptrons (MLPs) to obtain sufficient expressive power.
\medskip
	
	\item
	PROCESSOR.
\medskip

	At time step $t$, the processor consists of a stack of message passing blocks named GN (Graph Network) blocks, which models interactions among nodes and propagates information by running $L$ steps to generate a sequence of updated latent graph $\bm{G}^t=(G^t_1, \cdots, G^t_L)$, where each block $l\in\{1,\cdots,L\}$ is computed based on the output of the previous block, i.e., $G^t_l=GN(G^t_{l-1})$. In a GN block, computations proceed from edges to nodes, which contains two update functions and an aggregation function, following three procedures: updating the edge feature for each edge, then aggregating the updated edge features and projecting them to node features, and finally updating the node feature using the aggregated edges information. Mathematically,
	\begin{eqnarray}
		\text{updating edge feature}: \qquad
		\bm{\tilde{e}}_{ij,(l)} &=&
		\phi^{e}\big(\bm{e}_{ij,(l)}, \bm{v}_{i,(l)}, \bm{v}_{j,(l)}\big), \\
		\text{aggregating edge features per node}: \qquad
		\bm{\bar{e}}_{i,(l)} &=&
		\bigoplus_{j\in {\mathcal N}(i)}  \bm{\tilde{e}}_{ij,(l)} , \\
		\text{updating node feature}: \qquad
		\bm{\tilde{v}}_{i,(l)} &=&
		\phi^{v}\big(\bm{\bar{e}}_{i,(l)}, \bm{v}_{i,(l)}\big),
	\end{eqnarray}
where $\phi^{e}$ and $\phi^{v}$ are edge and node update functions respectively. These functions are mapped across all edges/nodes to compute per-edge/per-node update. In our implementation, MLPs are chosen as internal edge and node update functions because of its universal approximation property. The subscript $(l)$ represents the block index. $\bigoplus$ denotes a permutation invariant aggregate function (such as sum, mean, maximum), which takes a set of edge updates as input and outputs a single element by projecting it onto the $i$-th node to represent the aggregated information. $\mathcal{N}(i)$ denotes indices of neighbours referring to a set of adjacent nodes that are connected to node $i$ via edges. $G_0^t$ is provided as the input of the processor's first GN block, we initialize the node features of graph $G_0^t$ to the current PDE solutions, i.e., $\bm{v}_{i,(0)} = \bm{{\rm u}}(\bm{{\rm x}}_i, t)$. Based on the above illustration, we see that the processor computes interactions among related nodes and edges, propagates the information via message passing steps and finally returns a well updated graph $G^t_L$, that is, $G^t_L = PROCESSOR\big(G^t_0\big)$.
\medskip

	\item
	DECODER.
\medskip

	Decoder extracts dynamics from the node features $\bm{v}_{i,(L)}$ of the final latent graph $G^t_L$, namely, $\bm{h}_{i,(L)}=\text{DECODER}\big(\bm{v}_{i,(L)}\big)$. As before, we use a MLP as the learned function for the decoder. To compute the next time step solutions, we integrate the output $\bm{h}_{i,(L)}$ using a forward-Euler integrator with time interval $\Delta t=1$: $\bm{{\rm u}}(\bm{{\rm x}}_i, t+\Delta t)=\bm{{\rm u}}(\bm{{\rm x}}_i, t) + \bm{h}_{i,(L)}$. Therefore, decoder's output represents updated quantities of the PDE system for two consecutive time steps, which is semantically meaningful to the update process.

\end{enumerate}

\subsection{Physics-informed loss function}\label{subsec2.3}
\noindent{\textbf{\\}}

To capture generalizable dynamic patterns consistent with underlying physical laws, one of the common techniques is incorporating physical constraints into the loss function of the model. The basic concept is to tune the model trainable parameters and unknown PDE coefficients so that the network can fit the observations while satisfying constraints defined by the underlying PDEs. In order to propose a well-posed optimization problem to improve solution accuracy and facilitate convergence, BCs/ICs are strictly enforced during network training. That is, IC is assigned to the nodes of the graph at the initial time step, and BC is assigned to the predicted solutions before computing PDE loss. The motivation for hard embedding BCs/ICs is to strictly incorporate all known information into the network to guide the network to quickly output physical plausible predictions.

Therefore, for the forward problem, our physics-informed loss function is simply the residual physics loss, which is given by
\begin{equation}
	\mathcal{L}(\bm{\theta}; \mathcal{D}_p)=
	\mathcal{L}_{PDE}(\bm{\theta};\mathcal{D}_p),
\end{equation}
where $\bm{\theta}$ represents the trainable parameters of the model. $\mathcal{D}_p = \{\bm{{\rm x}}_i^p, t_p\}_{i=1}^{N_p}$ is collocation points set for $p=1,\cdots,N_{t_p}$.

For the inverse problem, the unknown parameter $\bm{\lambda}$ can be inferred by assimilating the observation data in a soft manner, where the above physics-informed loss is augmented by a data loss. That is, the following loss function is formulated:
\begin{equation}
	\mathcal{L}(\bm{\theta}, \bm{\lambda}; \mathcal{D}_p, \mathcal{D}_d)=
	(1-\gamma)\mathcal{L}_{PDE}(\bm{\theta}, \bm{\lambda}; \mathcal{D}_p)
    + \gamma \mathcal{L}_{Data}(\bm{\theta}; \mathcal{D}_d),
\end{equation}
where $\gamma$ is a weighted parameter of the data loss, $\mathcal{D}_d =\{ \bm{{\rm y}}(\bm{{\rm x}}_i^{d}, t_{d})\}_{i=1}^{N_d}$ is the observation data set. It is worth noting that the observed time point $t_d\, (d=1,\cdots,N_{t_d})$ may not equal the predicted time point $t_p$.

In the discrete-time setting, $\{\bm{{\rm x}}_i^p,t_p\}$ selected from the interior mesh nodes and the corresponding time points acts as collocation points for calculating the PDE loss term $\mathcal{L}_{PDE}(\bm{\theta},\bm{\lambda}; \mathcal{D}_p)$. Additional data loss will be assessed when the inverse problem is considered. The observation data $\{\bm{{\rm y}}(\bm{{\rm x}}_i^{d}, t_d)\}$ and the collocation points will be superimposed together to calculate the data loss and the PDE loss. $N_p$ and $N_d$ are the number of collocation points and observations for network training, respectively.

Concretely, the PDE loss aims to ensure consistency with physical laws, which is defined as the mean of squared residuals given by
\begin{equation*}
\mathcal{L}_{PDE}(\bm{\theta},\bm{\lambda}; \mathcal{D}_p) =
{\mathbb E}\big[\big\|\mathcal{R}(\bm{{\rm x}}^{p},t_p;\bm{\theta},\bm{\lambda})\big\|_2^2\big],
\end{equation*}
where $\|\cdot\|_2$ represents the $l_2$ norm, and the PDE residual is
\begin{equation*}
\mathcal{R}(\bm{{\rm x}}^{p},t_p;\bm{\theta},\bm{\lambda}) =
\frac{{\partial \bm{{\rm u}}(\bm{{\rm x}}^{p},t_p; \bm{\theta})}}{\partial t}
+ {\mathcal F}\big[\bm{{\rm u}}, \nabla_{\bm{{\rm x}}}\bm{{\rm u}},
\Delta_{\bm{{\rm x}}}\bm{{\rm u}}, \cdots; \bm{\lambda}\big]
- \bm{{\rm f}}(\bm{{\rm x}}^{p},t_p).	
\end{equation*}

The data loss measures the MSE between observations $\bm{{\rm y}}(\bm{{\rm x}}^{d},t_d)$ and predictions $\bm{{\rm u}}_{pred}(\bm{{\rm x}}^{d},t_d; \bm{\theta})$ of PIGNN,
\begin{equation}
	\mathcal{L}_{Data}(\bm{\theta}; \mathcal{D}_d) =
	{\mathbb E}\big[ \big\| \bm{{\rm y}}(\bm{{\rm x}}^{d},t_d)
    - \bm{{\rm u}}_{pred}(\bm{{\rm x}}^{d},t_d; \bm{\theta}) \big\|_{2}^2\big]. \nonumber
\end{equation}

With the above spatiotemporal discretization scheme, $\mathcal{L}(\bm{\theta},\bm{\lambda})$ can be equivalently expressed as,
\begin{eqnarray}\label{Eq2.8}
\mathcal{L}(\bm{\theta},\bm{\lambda};\mathcal{D}_p, \mathcal{D}_d) &=&
\frac{1-\gamma}{N_p N_{t_p}}\sum_{p=1}^{N_{t_p}}\sum_{i=1}^{N_p}
\big\| \mathcal{R}(\bm{{\rm x}}_i^{p},t_p;\bm{\theta},\bm{\lambda})\big\|_{2}^2  \\
&&
+ \frac{\gamma }{N_dN_{t_d}}\sum_{d=1}^{N_{t_d}}\sum_{i=1}^{N_d}
\big\| \bm{{\rm y}}(\bm{{\rm x}}_i^d,t_d)
- \bm{{\rm u}}_{pred}(\bm{{\rm x}}_i^{d},t_d; \bm{\theta}) \big\|_{2}^2.  \nonumber
\end{eqnarray}

At time step $t$, the predicted solutions of the next time step $t+\Delta t$ are separated from the calculation graph and then flowed into the network to calculate solutions for the step after next $t+2\Delta t$. The network repeats this procedure to calculate predictions for all training time steps, and accumulates the losses of all steps to update the network parameters in one go. Repeat the above process until a specified number of training iterations is reached or the network converges. A well optimized PIGNN model with optimal parameters $\bm \theta^*$ can not only approximate the PDE solutions, but also identify the unknown parameter of PDEs $\bm{\lambda}^*$, namely, $$\{\bm{\theta}^*,\bm{\lambda}^*\} = \arg\min_{\{\bm{\theta},\bm{\lambda}\}} \mathcal{L}(\bm{\theta},\bm{\lambda};\mathcal{D}_p, \mathcal{D}_d).$$

\subsection{Rules of selecting features}\label{subsec2.4}
\noindent{\textbf{\\}}

According to the form of Eq. \eqref{Eq2.1}, if a system has no additional source input, i.e., $\bm{{\rm f}}(\bm{{\rm x}},t)$ equals 0, then node features contain the learned node solution at time step $t:\bm{{\rm u}}(\bm{{\rm x}},t)$, and a node type vector indicating whether it is an interior node or a boundary node. While edge features consist of the relative displacement of two adjacent nodes $i$ and $j$ and its Euclidean distance. Namely,

node features: $\bm{v_i}=\{\bm{{\rm u}}(\bm{{\rm x}},t), \rm{\,node \_type}\}$,

edge features: $\bm{e_{ij}}= \{\bm{{\rm x}}_{ij}, \,\|\bm{{\rm x}}_{ij}\|\}$.
\smallskip

However, if the source term $\bm{{\rm f}}(\bm{{\rm x}},t)$ is nonzero, it does affect the evolution of the system, so we add the source term that does not contain solution functions to node features to capture its variations in both spatial and time space. Researchers know the influence and importance of the external source on the system. We need to embed such expert knowledge into our model to enable the network to learn these key dynamic patterns, thereby enhancing the generalizability of the model. The edge features remain the same as above. In this case, the features of nodes and edges are as follows:

node features: $\bm{v_i}=\{\bm{{\rm u}}(\bm{{\rm x}},t), \rm{\,node \_type}, \bm{{\rm f}}(\bm{{\rm x}},t)\}$,

edge features: $\bm{e_{ij}}= \{\bm{{\rm x}}_{ij}, \,\|\bm{{\rm x}}_{ij}\|\}$.
\smallskip

\section{Some computational issues on the graph} \label{sec3}
\noindent{\textbf{\\}}

In this section, we will discuss some computational problems on the graph. Specifically, we discuss methods for computing discrete differential operators on the graph, and the interpolation method adopted in the inverse problem.

\subsection{Differential operators on the graph}
\noindent{\textbf{\\}}

We use finite difference method to compute discrete differential operators (time and spatial derivatives) on a graph.

\subsubsection{Time derivative}
\noindent{\textbf{\\}}

Recall that PIGNN iteratively generates solutions of the next time step $t+\Delta t$ according to
solutions of the current time step $t$. By utilizing solutions of two consecutive time steps, we can estimate the first order time derivative term in Eq. \eqref{Eq2.1}. Under the Euler setting, either explicit, implicit or semi-implicit methods can be used to calculate the time derivative. The explicit method usually contains one or several forward computation steps. Implicit or semi-implicit method, such as Crank-Nicolson's scheme, requires a fixed point iterative solver. Numerical results show that, compared to explicit methods, implicit or semi-implicit methods provide some stability for the outputs in admissible time steps, albeit at the expense of increased computation cost. In this paper, we choose the implicit update rule to calculate the time derivative.

To sum up, due to its simplicity and stability, we use the implicit/backward Euler difference quotient to approximate the time-derivative, and the implicit/backward Euler discretization of Eq. \eqref{Eq2.1} is,
\begin{equation}\label{Eq3.1}
\frac{\bm{{\rm u}}(\bm{{\rm x}},t+\Delta t)-\bm{{\rm u}}(\bm{{\rm x}},t)}{\Delta t}
+ {\mathcal F}\big[\bm{{\rm u}}(\bm{{\rm x}},t+\Delta t),
\nabla_{\bm{{\rm x}}}\bm{{\rm u}}(\bm{{\rm x}},t+\Delta t), \cdots; \bm{\lambda}\big]
= \bm{{\rm f}}(\bm{{\rm x}},t+\Delta t).
\end{equation}

\subsubsection{Gradient operator}
\noindent{\textbf{\\}}

In PINN, spatial derivatives in the PDE loss are calculated by the automatic differentiation technique, which utilizes the accumulated values during code execution and the chain rule of differential calculus to compute derivatives. However, this technique is not applicable to our framework due to computational complexity in backpropagation. Our PIGNN proceeds in an iterative manner, and the gradient of the solution function at each node depends on historical results up to the initial time, which leads to memory resource pressure and computational complexity. On a regular grid, the gradient of solution $\bm{{\rm u}}$ at node $\bm{{\rm x}}_i$ is a vector whose components are the derivatives of $\bm{{\rm u}}$ at $\bm{{\rm x}}_i$ along the axes. Each partial derivative can be computed by stencils of backward/forward/central finite difference. However, on an irregular mesh, it is infeasible to directly calculate the derivative along the axis. On the basis of least square method, we propose a simple gradient estimation method. Using the first-order Taylor expansion, we can construct a linear system. An estimation of the gradient is then obtained by learning the best fitting solution (least squares solution) of the linear system.

Let ${\bm{{\rm x}}}_i=({\rm x}_i^1,\cdots,{\rm x}_i^d)^{\rm T}\in {\mathbb R}^d,
\,\bm{{\rm u}}({\bm{{\rm x}}})= \bm{{\rm u}}({\bm{{\rm x}}},t)$ for any fixed $t$, $\,\mathcal{N}({\bm{{\rm x}}_i})=\{{\bm{{\rm x}}}_{j1}, \cdots, {\bm{{\rm x}}}_{jp}\}$ be the node set of one-ring neighbours of node ${\bm{{\rm x}}}_i$, $p=|\mathcal{N}({\bm{{\rm x}}_i})|$ be the number of neighbours of node ${\bm{{\rm x}}}_i$.

According to the first-order Taylor expansion of solutions at node $\bm{{\rm x}}_i$, we have
\begin{eqnarray}\label{Eq3.2}
\bm{{\rm u}}({\bm{{\rm x}}}_j)- \bm{{\rm u}}({\bm{{\rm x}}}_i) =
({\bm{{\rm x}}}_j-{\bm{{\rm x}}}_i)^{\rm T}\nabla \bm{{\rm u}}({\bm{{\rm x}}_i}),
\,\, \forall j\in\mathcal{N}({\bm{{\rm x}}_i}).
\end{eqnarray}

Without loss of generality, we assume that the number of neighbours is larger than the dimension of the above linear system, the system is overdetermined and only admits a least squares solution. Utilizing the least squares technique, we obtain an estimation of the gradient by solving the linear system.

Let $\bm{A}_{i}$ be the $p\times d$ matrix obtained by collecting all edges $\bm{{\rm x}}_j-\bm{{\rm x}}_i$, $\bm {U}_i$ be the column vector consisting of differences of node solutions connecting the corresponding edges $\bm{{\rm u}}({\bm{{\rm x}}}_j)- \bm{{\rm u}}(\bm{{\rm x}}_i)$, concretely,
\begin{eqnarray*}
\bm{A_{i}} &=&
\left(\begin{array}{cc}
    \big(\bm{{\rm x}}_{j1}-\bm{{\rm x}}_i\big)^{\rm T} \\
    \vdots                                     \\
    \big(\bm{{\rm x}}_{jp}-\bm{{\rm x}}_i\big)^{\rm T}\\
\end{array}\right)
=\,
\left(\begin{array}{ccc}
	{\rm x}_{j1}^{1} - {\rm x}_{i}^{1} & \cdots &
	{\rm x}_{j1}^{d} - {\rm x}_{i}^{d} \\
	\vdots & \ddots & \vdots \\
	{\rm x}_{jp}^{1} - {\rm x}_{i}^{1} & \cdots &
	{\rm x}_{jp}^{d} - {\rm x}_{i}^{d} \\
\end{array}\right)_{p\times d} ,
\end{eqnarray*}
\begin{eqnarray*}
\bm{U_i} &=&
\left(\begin{array}{c}
    \bm{{\rm u}}(\bm{{\rm x}}_{j1} )-\bm{{\rm u}}({\bm{{\rm x}}_i} )  \\
    \vdots                   \\
    \bm{{\rm u}}(\bm{{\rm x}}_{jp} )-\bm{{\rm u}}({\bm{{\rm x}}_i} )  \\
\end{array}\right)_{p\times 1}.
\end{eqnarray*}

Then the linear system Eq. \eqref{Eq3.2} can be written as $\bm {U}_i=\bm{A}_{i}\nabla \bm{{\rm u}}({\bm{{\rm x}}_i})$, and its least square solutions, namely, the estimation of the gradient at node $\bm{{\rm x}}_i$ is
\begin{equation}\label{Eq3.4}
\nabla \bm{{\rm \hat{u}}}({\bm{{\rm x}}_i})  =
 \big({\bm A}_i^{\rm T}{\bm A}_i\big)^{-1}{\bm A}_i^{{\rm T}} {\bm U}_i.
\end{equation}

\subsubsection{Laplace operator}
\noindent{\textbf{\\}}

Another commonly used differential operator is the Laplace operator. It is worth noting that the following method we proposed is general, which can be used to calculate other high-order differential operators similarly. Commonly used methods to compute the Laplace operator on graphs (unstructured meshes) include Taubin's discretization \cite{Taubin1995, Taubin2000}, Fujiwara's discretization \cite{Fujiwara1995}, Desbrun et al.'s discretization \cite{Desbrun1999}, Mayer's discretization \cite{Mayer2001}, and Meyer et al.'s discretization \cite{MeyerDesbrun2003}. All these methods adopt a general form which can be expressed in our notations as:
\begin{equation}\label{Eq3.6}
	\Delta \bm{{\rm u}}(\bm{{\rm x}}_i,t)=\sum_{j\in \mathcal{N}({\bm{{\rm x}}_i})}
	{\omega_{ij}}\big[\bm{{\rm u}}(\bm{{\rm x}}_j,t)-\bm{{\rm u}}(\bm{{\rm x}}_i,t)\big],
\end{equation}
where $\Delta={\rm div}(\nabla) =\nabla\cdot \nabla $ denotes the Laplace operator, $\bm{\omega}_{i}= (\omega_{i1},\cdots,\omega_{ip})^{\rm T}$ is the coefficient/weight vector to be determined.

\smallskip
\noindent{\textbf{Remark 1.}} We can interpret the Eq. \eqref{Eq3.6} in the following way. Recalling that the Gauss Divergence Theorem says, the surface integral of a vector field $\bm{F}$ over a closed surface $S$ is equal to the volume integral of the divergence of the vector field $\bm{F}$ over the volume $V$ enclosed by the closed surface, i.e., $\iint_{S} \bm{F}\cdot \vec{\bm{n}}\,{\rm d}S =\iiint_{V} (\nabla \cdot\bm{F}){\rm d}V $. By specializing the theorem to the plane, then the following special equation holds,
\begin{equation}\label{Eq3.7}
\int_{\partial S} \bm{F}\cdot \vec{\bm{n}}\,{\rm d}s =\iint_{S} (\nabla \cdot\bm{F}){\rm d}S.
\end{equation}

Letting $\bm{F}= \nabla\bm{{\rm u}}$ and paying attention to the Eq. \eqref{Eq3.2}, we see that, taking the small area around node $\bm{{\rm x}}_i$ as the domain, according to Eq. \eqref{Eq3.7}, the Laplace value of node $\bm{{\rm x}}_i$ can alternatively be approximated by the sum of directional derivatives along the domain boundary, which is approximated by Eq. \eqref{Eq3.6}.\qquad\qquad\quad$\Box$

Noting from the coefficient expressions of classical discretizations, such as combinatorial weights, mean-value coordinates weights, and cotan weights, we see that coefficients only related to the spatial locations of adjacent nodes, and have nothing to do with values of solutions. That is to say, for the PDE system, when the discretization scheme on the computational domain is fixed, the generated mesh is determined, and the coefficients $\bm{\omega}$ is also determined and remains unchanged. On this basis, we estimate the coefficients by selecting a set of test functions and applying the ordinary least squares technique.

In order to calculate the second order derivatives at node $\bm{{\rm x}}_i$, we choose a set of basis functions whose degree don't exceed $m\,(m\geq2)$ as test functions $\tilde{\bm{{\rm u}}}^{k},\,k=1,\cdots,K$, where $K$ is the number of test functions. These basis functions are terms (excluding the constant term) of a $d$-element polynomial of $m$-th degree, i.e., $\tilde{\bm{{\rm u}}}^{k}(x,y,t) \in \Psi_d=\{x_1,\cdots,x_d,x_1^2,x_1x_2,\cdots,x_d^2,\cdots,x_1^m,\cdots,x_d^m\}$.

Based on the selected test functions, by finding the least squares solution of Eq. \eqref{Eq3.6}, we obtain the following estimation of the coefficient vector,
\begin{equation}\label{Eq3.9}
\bm{\hat{\omega}}_i=\big(\bm{C}_{i}^{{\rm T}}\bm{C}_{i}\big)^{-1}\bm{C}_{i}^{{\rm T}}\bm{B}_i,
\end{equation}
where,
\begin{eqnarray*}
\qquad
\bm{\hat{\omega}}_i &=& [\hat{\omega}_{i1},\cdots,\hat{\omega}_{ip}]^T,\qquad\qquad\quad\,\,
\tilde{\bm{{\rm u}}}_{ji}^{k} =
\tilde{\bm{{\rm u}}}^{k}(\bm{{\rm x}}_j,t)-\tilde{\bm{{\rm u}}}^{k}(\bm{{\rm x}}_i,t),\\
\bm{C}_i &=&
\left(\begin{array}{ccc}
\tilde{\bm{{\rm u}}}_{1i}^{1} & \cdots & \tilde{\bm{{\rm u}}}_{pi}^{1}   \\
\vdots             & \ddots  & \vdots                \\
\tilde{\bm{{\rm u}}}_{1i}^{K} & \cdots & \tilde{\bm{{\rm u}}}_{pi}^{K}   \\
\end{array}\right)_{K\times p},\quad
\bm{B}_i =
\left(\begin{array}{c}
\Delta\tilde{\bm{{\rm u}}}^{1}(\bm{{\rm x}}_i,t)  \\
\vdots                   \\
\Delta\tilde{\bm{{\rm u}}}^{K}(\bm{{\rm x}}_i,t)  \\
\end{array}\right)_{K\times 1}. \qquad\qquad\qquad
\end{eqnarray*}

Substituting \eqref{Eq3.9} into \eqref{Eq3.6} gives us the estimation of the Laplace operator at node $\bm{{\rm x}}_i$.

\smallskip
\noindent{\textbf{Remark 2.}}

(1) To eliminate the influence of $\bm{{\rm x}}_i$ on ${\omega_{ij}}$ and improve computational stability, we translate the coordinate system and compute $\bm{{\rm u}}(\bm{{\rm x}}_j-\bm{{\rm x}}_i,t) $ instead of $\bm{{\rm u}}(\bm{{\rm x}}_j,t)$, then Eq. \eqref{Eq3.6} becomes
\begin{equation}\label{Eq3.9.1}
\Delta \bm{{\rm u}}(0,t)=\sum_{j\in \mathcal{N}({\bm{{\rm x}}_i})}{\omega_{ij}}
\big[\bm{{\rm u}}(\bm{{\rm x}}_j-\bm{{\rm x}}_i,t)-\bm{{\rm u}}(0,t)\big].
\end{equation}

(2) In general, in a $d$-dimensional space (i.e, $d$ variables), we can calculate that the number of non-constant terms for a test function with no more than $m$-th degree is $K=C_{d+m}^{m}-1$, then to make Eq.\eqref{Eq3.6} have a solution, the degree of the test function needs to satisfy $C_{d+m}^{m}\geq p+1$, where $p$ is the number of neighbours of the current node. \qquad\qquad\qquad\qquad\qquad\qquad\qquad\qquad\qquad\qquad\qquad\qquad\qquad\qquad\qquad\qquad\,\,$\Box$

\begin{definition}\label{def1}[Order of approximation]
Let $\bm{{\rm x}}_0\in {\mathbb R}^p$, $\bm{{\rm x}}_1,\cdots,\bm{{\rm x}}_p\in\mathcal{N}(\bm{{\rm x}}_0, R)=\big\{\bm{{\rm x}} | \|\bm{{\rm x}}-\bm{{\rm x}}_0\|\leq R\big\})$.
Define linear operator $\bm{{\rm \omega}}$ as follows,
\begin{equation}\label{Eq3.9.2}
\bm{{\rm \omega}}\bm{{\rm u}}(\bm{{\rm x}}_0) = \sum\limits_{k=1}^p {\omega_{k}}
\big[\bm{{\rm u}}(\bm{{\rm x}}_k)-\bm{{\rm u}}(\bm{{\rm x}}_0)\big],
\,\,{\rm for}\,\, \bm{{\rm u}}\in C^2\big(\mathcal{N}(\bm{{\rm x}}_0, R)\big).
\end{equation}
If for all $\bm{{\rm u}}\in C^2\big(\mathcal{N}(\bm{{\rm x}}_0, R)\big)$, we have
\begin{equation}\label{Eq3.9.3}
\Delta \bm{{\rm u}}(\bm{{\rm x}}_0)-\sum\limits_{k=1}^p {\omega_{k}}
\big[\bm{{\rm u}}\big(\bm{{\rm x}}_0+r(\bm{{\rm x}}_k-\bm{{\rm x}}_0)\big)-\bm{{\rm u}}(\bm{{\rm x}}_0)\big]
=o(r^s), \,\,{\rm as}\,\, r\rightarrow0.
\end{equation}

Then we say $\bm{{\rm \omega}}$ is an approximation of Laplacian $\Delta$ with order of magnitude $s$ at $\bm{{\rm x}}_0$.
\end{definition}

\begin{lemma}\label{lemma3}
Let $\,\bm{{\rm x}}_0=\bm{0}$, $\bm{{\rm x}}_1,\cdots,\bm{{\rm x}}_p\in\mathcal{N}(\bm{0}), \,d=2,\, \Psi=\Psi_2=\{x, y, x^2, xy,y^2,\cdots,\\ x^m,\cdots,y^m\}$, $\varphi_1=x,\, \varphi_2=y,\cdots$, $|\Psi|=n$, $\bm{\omega}_r$ be the least square solution of the following linear system
\begin{equation}\label{Eq3.9.4}
\bm{{\rm Y}}=\Phi(r)\bm{\omega},
\end{equation}
where
\begin{eqnarray*}
&&
\bm{\omega}=(\omega_1,\cdots,\omega_p)^{\rm T}, \qquad\quad
\bm{{\rm Y}} = \big(\Delta\varphi_1(\bm{0}),\cdots, \Delta\varphi_n(\bm{0})\big)^{\rm T},
\qquad\qquad\qquad \\
&&
\bm{\Phi}(r)=
\left(\begin{array}{ccc}
\varphi_1(r\bm{{\rm x}}_{1}) & \cdots  & \varphi_1(r\bm{{\rm x}}_{p})   \\
\vdots             & \ddots  & \vdots                \\
\varphi_n(r\bm{{\rm x}}_{1}) & \cdots  & \varphi_n(r\bm{{\rm x}}_{p})  \\
\end{array}\right)_{n\times p}, \quad
\bm{\Phi} = \bm{\Phi}(1).
\end{eqnarray*}
Let $\bm{\eta}_r=r^2\bm{\omega}_r$, then

1) $\bm{\eta}_r$ is bounded.

2) Let $\{\bm{\eta}_n\}_{n=1}^{\infty}$ be any convergent subsequence of $\bm{\eta}_r$, if $\bm{\eta} = \lim\limits_{n\rightarrow\infty}\bm{\eta}_n$, then $\bm{\eta}$ is an approximation of Laplacian $\Delta$ with the largest order.

\end{lemma}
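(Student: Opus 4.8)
# Proof Proposal for Lemma 3.?

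\textbf{Setup and strategy.} The key observation is that the least-squares solution of the overdetermined system $\bm{{\rm Y}} = \bm{\Phi}(r)\bm{\omega}$ is $\bm{\omega}_r = \bm{\Phi}(r)^{+}\bm{{\rm Y}}$, where $\bm{\Phi}(r)^{+} = (\bm{\Phi}(r)^{\rm T}\bm{\Phi}(r))^{-1}\bm{\Phi}(r)^{\rm T}$ is the Moore--Penrose pseudoinverse (assuming $\bm{\Phi}(r)$ has full column rank $p$ for small $r$, which one should check first). The plan is to track how $\bm{\Phi}(r)$ degenerates as $r\to 0$. Since $\varphi_k$ is a monomial of some degree $\deg\varphi_k \in \{1,\dots,m\}$, each entry satisfies $\varphi_k(r\bm{{\rm x}}_j) = r^{\deg\varphi_k}\varphi_k(\bm{{\rm x}}_j)$. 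Hence $\bm{\Phi}(r) = D(r)\bm{\Phi}$, where $D(r) = \mathrm{diag}(r^{\deg\varphi_1},\dots,r^{\deg\varphi_n})$ is a diagonal scaling acting on rows. Likewise $\bm{{\rm Y}}$ has nonzero entries only in positions $k$ with $\deg\varphi_k = 2$ (since $\Delta\varphi_k(\bm{0})\neq 0$ only for the pure quadratics $x^2, y^2$), and for those entries $D(r)$ contributes a factor $r^2$. This suggests that $r^2\bm{\omega}_r$ is the naturally normalized quantity.

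\textbf{Part 1: Boundedness of $\bm{\eta}_r = r^2\bm{\omega}_r$.} I would substitute $\bm{\Phi}(r) = D(r)\bm{\Phi}$ into the least-squares formula:
\begin{equation*}
\bm{\omega}_r = \big(\bm{\Phi}^{\rm T} D(r)^2 \bm{\Phi}\big)^{-1}\bm{\Phi}^{\rm T} D(r)\,\bm{{\rm Y}}.
\end{equation*}
Factor out the dominant power of $r$: the smallest exponent appearing in $D(r)$ is $1$ (from the linear monomials $x,y$), so write $D(r) = r\,\tilde D(r)$ with $\tilde D(r) = \mathrm{diag}(1,1,r,r,r,\dots)$ bounded as $r\to 0$. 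Then $\bm{\Phi}^{\rm T} D(r)^2\bm{\Phi} = r^2\,\bm{\Phi}^{\rm T}\tilde D(r)^2\bm{\Phi}$, and since $\bm{{\rm Y}}$ is supported on the $r^2$-rows, $D(r)\bm{{\rm Y}} = r^2\,\bm{{\rm Y}}$ exactly. Therefore
\begin{equation*}
\bm{\eta}_r = r^2\bm{\omega}_r = \big(\bm{\Phi}^{\rm T}\tilde D(r)^2\bm{\Phi}\big)^{-1}\bm{\Phi}^{\rm T}\,\bm{{\rm Y}}.
\end{equation*}
The hard part is showing $\bm{\Phi}^{\rm T}\tilde D(r)^2\bm{\Phi}$ stays invertible with bounded inverse as $r\to 0$, since $\tilde D(r)^2 \to \mathrm{diag}(1,1,0,\dots,0)$ is singular. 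This requires a rank argument: the block of $\bm{\Phi}$ corresponding to the linear monomials $x,y$ — call it the two rows $\bm{\Phi}_{\mathrm{lin}}$ — may not have rank $p$ when $p>2$. So the limit is genuinely singular and $\bm{\eta}_r$ need \emph{not} converge; this is precisely why the statement only claims boundedness and passes to subsequences. I would handle this by a block/Schur-complement or SVD analysis: split coordinates of $\bm{\omega}$ along $\ker\bm{\Phi}_{\mathrm{lin}}$ and its complement, show the component in the complement is $O(1)$ directly, and show the component in $\ker\bm{\Phi}_{\mathrm{lin}}$ is controlled because on that subspace the next-order term ($r^2$ from quadratic rows, etc.) dominates and the corresponding Gram block is uniformly positive definite on that subspace. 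A cleaner route: order the monomials by degree, perform a graded/Neumann-series expansion of $(\bm{\Phi}^{\rm T}\tilde D(r)^2\bm{\Phi})^{-1}$, and read off that every entry of $\bm{\eta}_r$ has a finite limit along its "leading block," giving boundedness.

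\textbf{Part 2: Subsequential limits are optimal-order approximations.} Fix a convergent subsequence $\bm{\eta}_n \to \bm{\eta}$, so $\bm{\omega}_{r_n} = \bm{\eta}/r_n^2 + o(1/r_n^2)$. For a test function $\bm{{\rm u}}\in C^2$, Taylor-expand at $\bm{0}$:
\begin{equation*}
\bm{{\rm u}}(\bm{0}+r\bm{{\rm x}}_k) - \bm{{\rm u}}(\bm{0}) = r\,\nabla\bm{{\rm u}}(\bm{0})^{\rm T}\bm{{\rm x}}_k + \tfrac{r^2}{2}\,\bm{{\rm x}}_k^{\rm T}\nabla^2\bm{{\rm u}}(\bm{0})\bm{{\rm x}}_k + o(r^2).
\end{equation*}
Plug into $\sum_k \omega_k[\bm{{\rm u}}(\bm{0}+r\bm{{\rm x}}_k)-\bm{{\rm u}}(\bm{0})]$. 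The defining least-squares equations for $\bm{\omega}_r$, namely $\bm{\Phi}(r)^{\rm T}(\bm{\Phi}(r)\bm{\omega}_r - \bm{{\rm Y}}) = 0$, together with the fact that all monomials $\varphi_k$ of degree $\le m$ are (in the appropriate scaled sense) reproduced, should give: $\sum_k \omega_{r,k}\,\varphi_k(r\bm{{\rm x}}_k)$ matches $\Delta\varphi_k(\bm{0})$ with error of high order — in particular $\sum_k \omega_{r,k}(r\bm{{\rm x}}_k)_{\text{linear monomials}} \to 0$ and $\sum_k \omega_{r,k}\cdot r^2(\bm{{\rm x}}_k$ quadratic part$) \to \Delta$-action. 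Passing to the limit along the subsequence, the $r$-linear term vanishes (this is where the least-squares normal equations restricted to the linear rows are used), and the $r^2$ term reproduces $\Delta\bm{{\rm u}}(\bm{0})$ exactly, while the $o(r^2)$ remainder times $\bm{\omega}_{r_n} = O(r_n^{-2})$ still needs care — here I would exploit that the $C^2$ remainder is $o(r^2)$ uniformly over the finite neighbour set and that $r_n^2\bm{\omega}_{r_n}$ is bounded by Part 1, so the product is $o(1)$. This yields $\Delta\bm{{\rm u}}(\bm{0}) - \sum_k\omega_{r_n,k}[\bm{{\rm u}}(\bm{0}+r_n(\bm{{\rm x}}_k-\bm{0}))-\bm{{\rm u}}(\bm{0})] \to 0$, and a sharper bookkeeping of which monomial degrees are annihilated (all of degree $1$ and all of degree $\ge 3$ up to $m$, via the reproduction property) pins down the exact power $r^s$, showing $s$ is as large as the chosen basis $\Psi_m$ permits. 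The maximality claim — no linear combination of the neighbour differences can do better than order $s$ — follows from a dimension-counting / Vandermonde-type nondegeneracy argument on the monomials not reproduced by $\Psi_m$.

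\textbf{Main obstacle.} The delicate point is Part 1: the Gram matrix $\bm{\Phi}(r)^{\rm T}\bm{\Phi}(r)$ is ill-conditioned as $r\to 0$ with condition number blowing up like $r^{-2(m-1)}$, so naive bounds fail. One must exploit the graded (monomial-degree) structure — a multiscale/block decomposition of the normal equations — to show that the ill-conditioning cancels exactly against the $r$-dependence of the right-hand side $\bm{{\rm Y}}$, leaving $r^2\bm{\omega}_r$ bounded. Verifying the relevant rank conditions on the blocks of $\bm{\Phi}$ (which encode that the neighbour configuration $\{\bm{{\rm x}}_k\}$ is "generic enough" for the monomial basis) is the technical heart of the argument.
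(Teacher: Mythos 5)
Your scaling observation is the same graded structure the paper exploits (the paper rewrites the system as $\bm{{\rm Y}}=\bm{\Lambda}_r\bm{\Phi}\bm{\eta}$ with $\bm{\Lambda}_r={\rm diag}(r^{-1},1,r,\dots,r^{m-2})$ acting blockwise by monomial degree), but your Part 1 is not a proof: the entire content of assertion 1) is deferred to an unexecuted Schur-complement/multiscale analysis of the near-singular Gram matrix, which you yourself flag as the unresolved ``technical heart.'' There is also an algebra slip in your key display: since $\bm{\Phi}(r)^{\rm T}\bm{\Phi}(r)=r^2\bm{\Phi}^{\rm T}\tilde D(r)^2\bm{\Phi}$ and $\bm{\Phi}(r)^{\rm T}\bm{{\rm Y}}=r^2\bm{\Phi}^{\rm T}\bm{{\rm Y}}$, the quantity $\big(\bm{\Phi}^{\rm T}\tilde D(r)^2\bm{\Phi}\big)^{-1}\bm{\Phi}^{\rm T}\bm{{\rm Y}}$ equals $\bm{\omega}_r$, not $\bm{\eta}_r=r^2\bm{\omega}_r$; consequently ``bounded inverse of $\bm{\Phi}^{\rm T}\tilde D(r)^2\bm{\Phi}$'' is the wrong target (that matrix is genuinely singular in the limit when $p>2$), and the correct target is a rate statement you never establish. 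The paper avoids the Gram-matrix conditioning question entirely by a variational comparison: take a fixed $\bm{\eta}_0$ that approximates $\Delta$ with the largest order $l$, note the blockwise residual identity $\|\bm{{\rm Y}}-\bm{\Lambda}_r\bm{\Phi}\bm{\eta}\|^2=r^{-2}\|\bm{\Phi}_1\bm{\eta}\|^2+\|\bm{{\rm Y}}_2-\bm{\Phi}_2\bm{\eta}\|^2+\cdots$, use least-squares optimality of $\bm{\eta}_r$ against $\bm{\eta}_0$ to get $\|\bm{\Phi}_i\bm{\eta}_r\|\le C$ for all $i\le l+1$ and small $r$, and then invoke full column rank of the stacked block matrix $\bm{\Phi}_{[l+1]}$ to bound $\|\bm{\eta}_r\|$. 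That comparison step is the missing idea in your plan.

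Part 2 has a parallel gap. Your Taylor-expansion argument, even if completed, would only show that the subsequential limit $\bm{\eta}$ is a consistent approximation of $\Delta$ (i.e.\ of some positive order); the lemma claims it attains the \emph{largest} order achievable for the given neighbour configuration, and your appeal to ``dimension-counting/Vandermonde nondegeneracy'' does not engage with that maximality, which depends on the specific stencil $\{\bm{{\rm x}}_k\}$ rather than on genericity alone. The paper proves maximality by contradiction using the same residual decomposition: if the limit $\bm{\eta}$ had order $s<l$, then $\|\bm{\Phi}_{s+1}\bm{\eta}\|\neq 0$, so along the subsequence the residual at $\bm{\eta}_n$ is bounded below by a constant times $r_n^{2(s-1)}$, which eventually exceeds the $O\big(r_n^{2(l-1)}\big)$ residual at $\bm{\eta}_0$, contradicting the fact that $\bm{\eta}_n$ is the least-squares minimizer. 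You would need to add both the comparison inequality in Part 1 and this contradiction mechanism in Part 2 for your outline to become a complete proof.
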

\begin{proof}
See Appendix A.
\end{proof}

\subsection{Interpolation method for inverse problem}
\noindent{\textbf{\\}}

In this section, we describe the interpolation method adopted in the inverse problem. Recall that in the inverse problem, some PDE parameters are unknown, but a few observations are available. For a given time point $t_d$ and spatial location $\bm{{\rm x}}_d$, the observed value $\bm{{\rm y}}(\bm{{\rm x}}_d,t_d)$ is known. By minimizing the difference between observations and PIGNN-based predictions at these given spatiotemporal points, we can estimate the unknown parameters. Noting that $t_d$ may not equal $t_p$, to calculate the data loss, we need to know the predicted values at any time step $t$ and spatial location $\bm{{\rm x}}$, i.e., $\bm{{\rm u}}_{pred}(\bm{{\rm x}}, t; \bm{\lambda, \theta})$. For simplicity, in the following, we denote $\bm{{\rm u}}_{pred}(\bm{{\rm x}}, t)=\bm{{\rm u}}_{pred}(\bm{{\rm x}}, t; \bm{\lambda, \theta})$.

Assume that $t_d\in[t_p,t_{p+1}),\,\,\mathcal{N}(\bm{{\rm x}}_d)=\{\bm{{\rm x}}_{1},\cdots,\bm{{\rm x}}_{l}\}$ be $l$ nearest nodes around $\bm{{\rm x}}_d$. According to the Taylor expansion, we have
\begin{eqnarray}
\bm{{\rm u}}(\bm{{\rm x}}_m, t_n) &\approx&
\bm{{\rm u}}(\bm{{\rm x}}_d,t_d)
+\frac{\partial \bm{{\rm u}}(\bm{{\rm x}}_d,t_d)}{\partial t}(t_n-t_d)
+ (\bm{{\rm x}}_m-\bm{{\rm x}}_d)^{\rm T}\nabla \bm{{\rm u}}(\bm{{\rm x}}_d,t_d),  \label{Eq3.10}\\
&&
\,\, \bm{{\rm x}}_m\in\mathcal{N}(\bm{{\rm x}}_d),\,\, n=p,p+1.  \nonumber
\end{eqnarray}

The estimation of the desired prediction and its derivatives can be obtained with the least square technique, which is given by
\begin{equation}\label{Eq3.11}
\bm{\hat{H}}=\big(\bm{Q}^{{\rm T}}\bm{Q}\big)^{-1}\bm{Q}^{{\rm T}}\bm{Z},
\end{equation}
where,
\begin{eqnarray*}
\bm{\hat{H}} =
\left(\begin{array}{c}
\bm{{\rm u}}_{pred}(\bm{{\rm x}}_d,t_d)  \\
\frac{\partial \bm{{\rm u}}_{pred}(\bm{{\rm x}}_d,t_d)}{\partial t}                  \\
\nabla \bm{{\rm u}}_{pred}(\bm{{\rm x}}_d,t_d)  \\
\end{array}\right), \qquad
\bm{Z} =
\left(\begin{array}{c}
\bm{{\rm u}}(\bm{{\rm x}}_1,t_p) \\
\vdots                   \\
\bm{{\rm u}}(\bm{{\rm x}}_l,t_p) \\
\bm{{\rm u}}(\bm{{\rm x}}_1,t_{p+1}) \\
\vdots                   \\
\bm{{\rm u}}(\bm{{\rm x}}_l,t_{p+1}) \\
\end{array}\right)_{2l\times 1},\\
\bm{Q} =
\left(\begin{array}{ccc}
1      & t_p-t       & (\bm{{\rm x}}_1-\bm{{\rm x}})^{\rm T}  \\
\vdots               & \vdots  & \vdots                \\
1      & t_p-t       & (\bm{{\rm x}}_l-\bm{{\rm x}})^{\rm T}   \\
1      & t_{p+1}-t   & (\bm{{\rm x}}_1-\bm{{\rm x}})^{\rm T}   \\
\vdots               & \vdots  & \vdots                \\
1      & t_{p+1}-t   & (\bm{{\rm x}}_l-\bm{{\rm x}})^{\rm T}  \\
\end{array}\right)_{2l\times (d+2)}.\qquad\qquad\qquad\qquad
\end{eqnarray*}

From Eq. \eqref{Eq3.11}, we can obtain the predicted value $\bm{{\rm u}}_{pred}(\bm{{\rm x}},t)$ at any spatial-temporal point $(\bm{{\rm x}},t)$, therefore, for the observation point $(\bm{{\rm x}}_d, t_d)$, its prediction $\bm{{\rm u}}_{pred}(\bm{{\rm x}}_d, t_d)$ is known, and data loss can be calculated.

\section{Experiments}\label{sec4}
\noindent{\textbf{\\}}

In this section, we conduct a series of experiments on a group of canonical PDEs in both forward and inverse settings to demonstrate the effectiveness of our proposed PIGNN. We aim to answer the following questions.

\begin{enumerate}[Q1.]
\item
How accurate is the proposed PIGNN in solving PDEs compared with the baseline model?\smallskip
\item
How does PIGNN perform in time extrapolation? (i.e., model settings are consistent with that of training time steps)?\smallskip
\item
Does PIGNN generalize well to other conditions not seen during training, such as other computation domains, spatial resolutions and ICs?
\end{enumerate}

To answer these questions, we choose three nonlinear PDE systems ranging from simple to complex: heat equation, Burgers equation and FitzHugh-Nagumo (FN) equation in 2-dimensional (2D) domains with known/unknown parameters, and take the vanilla PINN as the baseline model. Heat equation is probably one of the simplest PDEs to describe the behavior of diffusive systems. Burgers equation \cite{Burgers1948} is an important nonlinear parabolic PDE widely used to model fluid dynamics, which directly incorporates the interaction between non-linear convection processes and diffusive viscous processes. The coupled FN equation \cite{FitzHugh1955} is an example of a nonlinear reaction-diffusion system that is introduced for the nerve impulse propagation in nerve axons to describe activator-inhibitor neuron activities excited by external stimulus.

The effectiveness of PIGNN for other equations such as Euler, Navier-Stokes and Maxwell is under investigation and related results will be presented in our future studies. It is rather remarkable that though PINN learns the global solution directly from training points, and PIGNN is a time-step iterative method. PINN is chosen as the baseline model for comparison because we believe that both methods can learn propagation trends of physical systems over the training time steps.

We consider the initial-boundary value problem with Dirichelet BCs in the computational domains for all examples. To verify the accuracy of our model, we choose specific ICs and BCs for heat and Burgers equations such that they have analytical solutions. The concrete settings for PIGNN and baseline model are illustrated in Appendix B. And concrete expressions of PDE examples are shown in Appendix C.

\subsection{Evaluation metrics}
\noindent{\textbf{\\}}

To evaluate the accuracy of predicted solutions, at time step $t$, we use the absolute error between the predicted solution $\bm{{\rm u}}_{pred}(\bm{{\rm x}},t;\bm{\theta})$ and the real solution $\bm{{\rm u}}^{*}(\bm{{\rm x}},t)$, given by,
\begin{equation}\label{Eq4.1}
{\rm Error} = |\bm{{\rm u}}_{pred}(\bm{{\rm x}},t;\bm{\theta})-\bm{{\rm u}}^{*}(\bm{{\rm x}},t)|.
\end{equation}

While for the entire testing period, we use the accumulative root-mean-square-error (aRMSE) which is given by
\begin{equation}\label{Eq4.2}
{\rm aRMSE} = \sqrt{\frac{1}{N N_t }
\sum_{k=1}^{N_t}{\sum_{i=1}^{N}{ \|\bm{{\rm u}}_{pred}(\bm{{\rm x}}_i,t_k;\bm{\theta})-\bm{{\rm u}}^{*}(\bm{{\rm x}}_i,t_k)\|_2^2 }}}  ,
\end{equation}
where $N_t$ is the total number of time steps within $[0,t]$.

\subsection{Results}
\noindent{\textbf{\\}}

In this subsection, we present results for three PDEs in both forward and inverse settings. Specifically, we start by showing results of the forward problem with long time steps to illustrate the accuracy and time extrapolability of our PIGNN. Then, a series of results with varying PDE parameters, computational domains and spatial resolutions are presented to demonstrate the generalizability of PIGNN. Moreover, we present parallel results to demonstrate that PIGNN can scale well to distributed multi-GPU systems to handle large-scale problems. In addition, a simple result verifies the ability of PIGNN to handle inverse problems. Finally, we conduct an ablation study to obtain a deeper understanding of our PIGNN. We display two experiments to elucidate the effects of the number of GN blocks and proper node features on model performance.

Our main finding is that PIGNN outperforms the PINN baseline for almost all cases, and in particular it generalizes better to more complex situations. The main advantage of our method is that models trained in small domains with simple settings possess fantastic fitting ability, and can be directly applied to more complex situations with large domains. This is extremely important in practice, as there is often a demand for rapid assessment or response. When the computational domain, ICs or BCs change, traditional numerical methods need to be recalculated, which is very time-consuming. DL-based methods with poor generalization ability may produce incorrect predictions. While our method can quickly give high confidence estimations.

\subsubsection{Extrapolability of the forward problem}
\noindent{\textbf{\\}}

We train an independent PIGNN/PINN model for each PDE example to obtain approximate solutions under the corresponding training settings. Specifically, for heat equation, we train the PIGNN model for 50 time steps with a time interval $\Delta t = 10^{-4}$, while for Burgers and FN equations, we only let the PIGNN model learn 10 time steps with $\Delta t = 10^{-3}$. We take the model with the least loss as the test model to evaluate the model's performance, hoping that it can infer the subsequent solutions. During testing, we predict hundreds or thousands of time steps for each example to demonstrate the time extrapolability of our PIGNN. For fair comparison, in PINN training the computation area and training time steps are the same as those in our approach. We randomly select a certain number of collocation points from both the computation domain and temporal scope $[0, {\rm{time\,\, steps}}\times\Delta t]$. Since PINN is meshless, when testing, the inputs to the PINN model are mesh points' coordinates of the corresponding PIGNN model and time step. That is, PIGNN's mesh points are taken as the domain sampling points.

Figures \ref{fig3}-\ref{fig5} describe the forward solution snapshots for each example predicted by PIGNN and PINN, compared against the analytical solutions. It is worth noting that there is no analytical solution to the FN equation, and we take results of the numerical commercial software as the reference. At the same time, to show the accuracy for each method, we provide snapshots of the absolute error at the corresponding time steps. Besides, we present the accumulative error curves with respect to time in Fig. \ref{fig6} to further depict the accuracy of different methods.

In Fig \ref{fig3}, we select three representative snapshots of heat equation at time steps 50, 1000 and 2500, respectively. It can be seen that our PIGNN shows excellent agreement with the analytical solution in both training and extrapolation phases, while PINN fails to match the analytical result at longer time steps. It is clear from the last two columns of the figure that the absolute error of PINN is much larger than that of PIGNN. This phenomenon is further verified by the left subplot of Fig. \ref{fig6}, where the aRMSE curve of PIGNN is consistently lower than that of PINN.

Figure \ref{fig4} exhibits similar performance for the Burgers equation, where we present predictions at time steps 5, 500 and 1500. As can be seen from the right subplot of Fig. \ref{fig6}, the aRMSE of PIGNN for the Burgers equation starts out relatively larger than that of PINN, then the curve flattens out and drops below the PINN curve at half steps. Since we fail to train a PINN model for the FN equation, we only present the extrapolation results of PIGNN at time steps 5 and 500 in Fig. \ref{fig5}, compared with the reference solution. Similar to the above two cases, the minor difference in training and extrapolation phases demonstrate the great learning potential of our PIGNN.

\begin{figure}[!htbp]
\begin{center}
\scalebox{0.5}[0.5]{\includegraphics{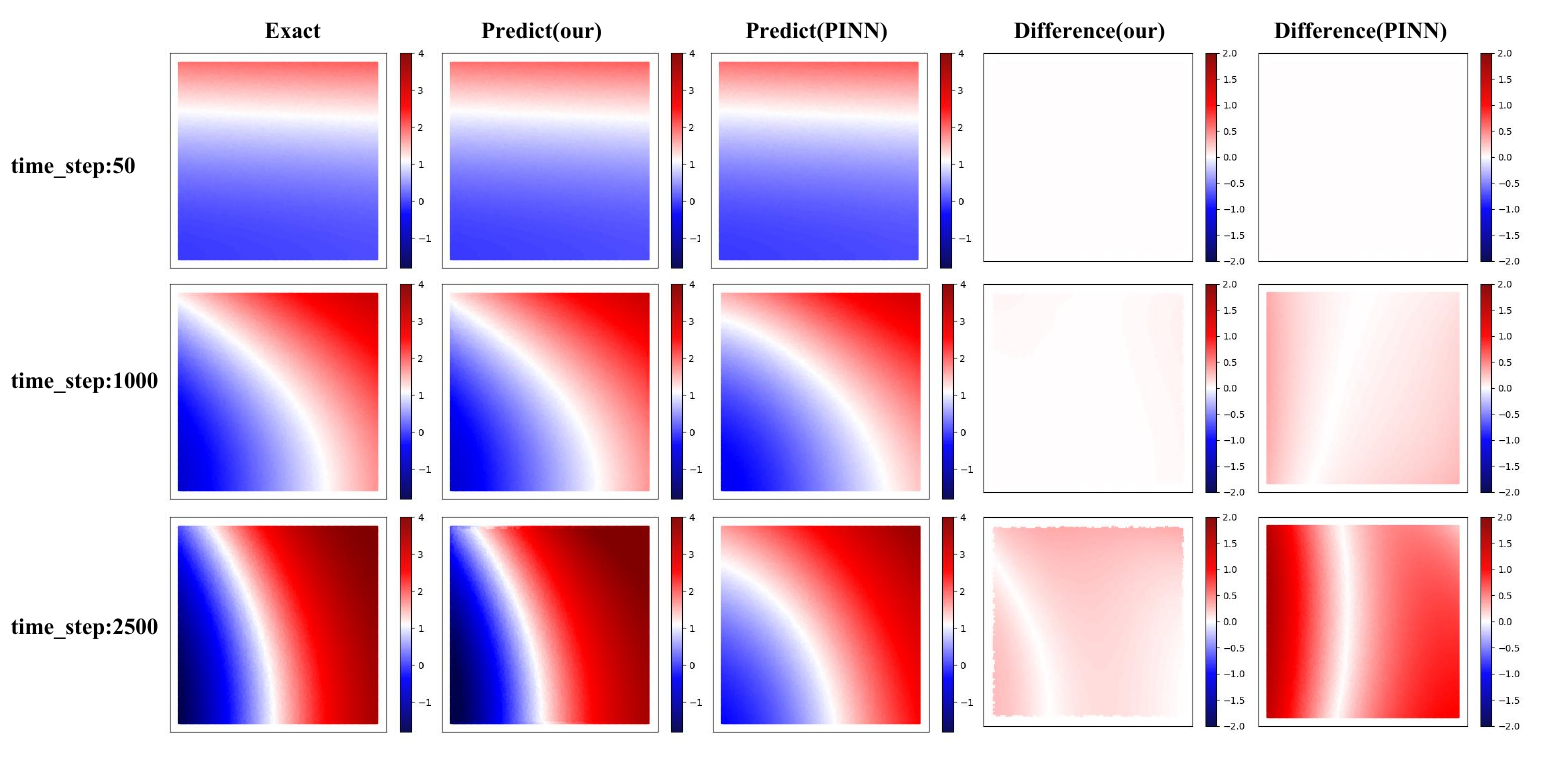}}
\caption{\footnotesize{Comparison of time extrapolability between PIGNN and PINN for heat equation.
		Computational domain: $[0,1]\times[0,1]$, mesh density=100.}}
\label{fig3}
\end{center}
.
\end{figure}

\begin{figure}[!htbp]
\begin{center}
\scalebox{0.55}[0.55]{\includegraphics{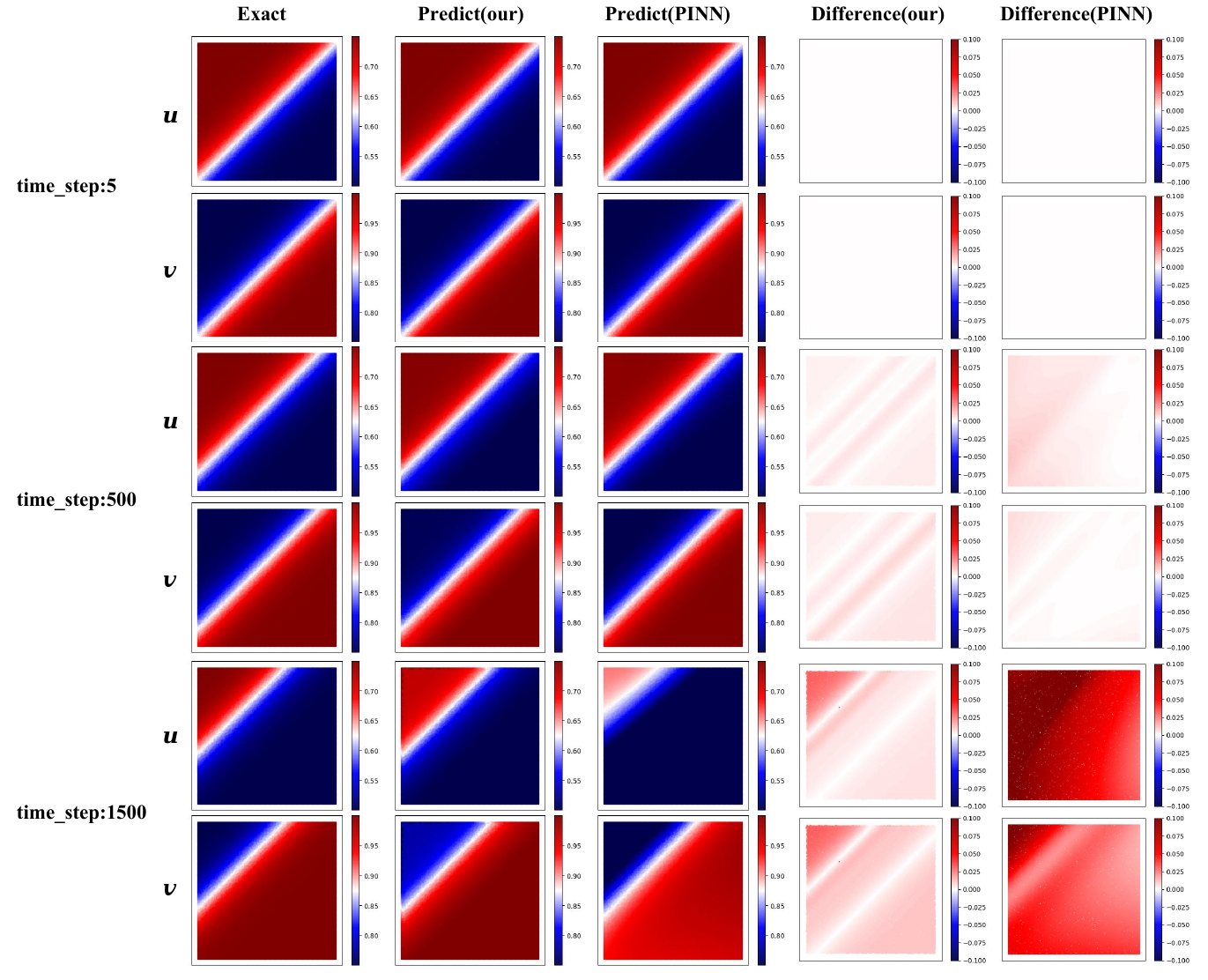}}
\caption{\footnotesize{Comparison of time extrapolability between PIGNN and PINN for Burgers equation.
Computational domain: $[0,1]\times[0,1]$, mesh density=100.}}
\label{fig4}
\end{center}
\end{figure}

\begin{figure}[!htbp]
\begin{center}
\scalebox{0.55}[0.55]{\includegraphics{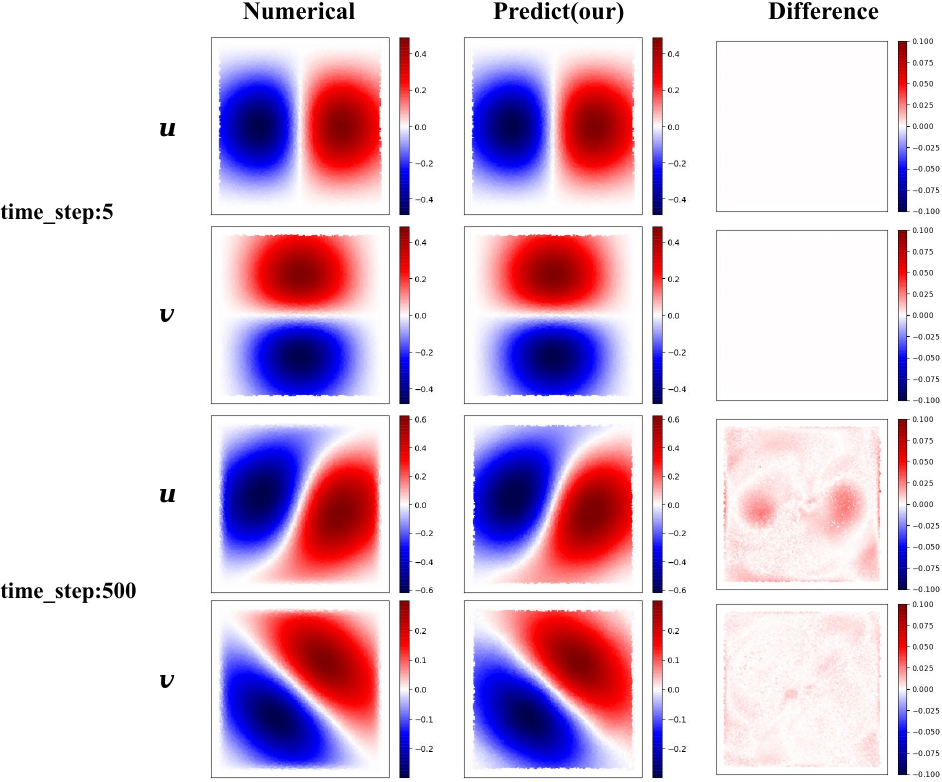}}
\caption{\footnotesize{Time extrapolability of PIGNN compared with numerical results for FN equation.
Computational domain: $[0,1]\times[0,1]$, mesh density=100.}}
\label{fig5}
\end{center}
\end{figure}

\begin{figure}[!htbp]
\begin{minipage}[t]{0.485\linewidth}
\centerline{\includegraphics[width=1.15\textwidth]{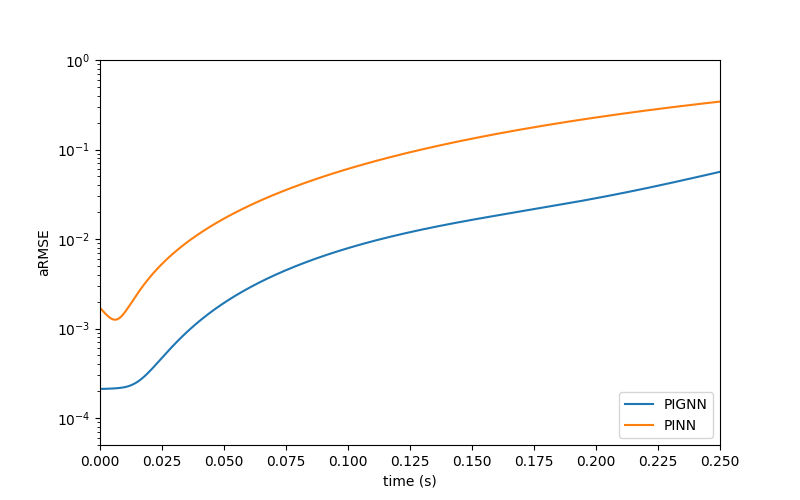}}
\end{minipage}
\hfill
\begin{minipage}[t]{0.485\linewidth}
\centerline{\includegraphics[width=1.15\textwidth]{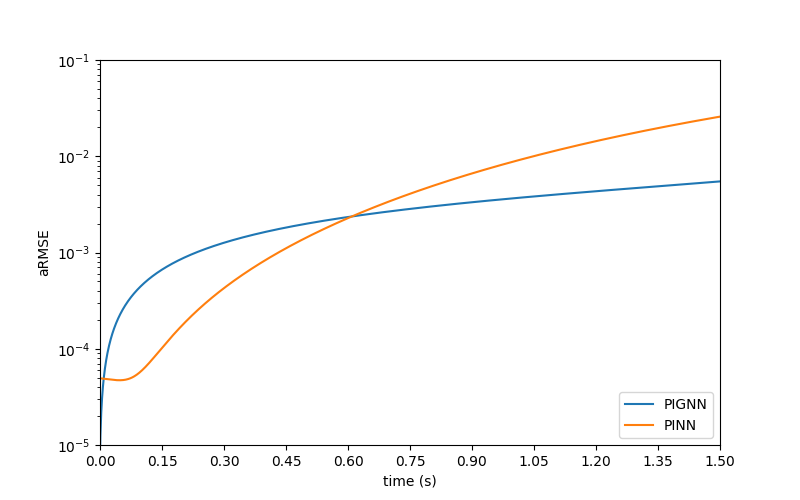}}
\end{minipage}
\caption{\footnotesize{Comparison of aRMSE curves between PIGNN and PINN
 for heat (left) and Burgers (right) equations. Computational domain: $[0,1]\times[0,1]$, mesh density=100.}}
\label{fig6}
\end{figure}

\subsubsection{Generalization on the forward problem}
\noindent{\textbf{\\}}

As aforementioned, fast estimations or responses are often required in practice. When computational domains, ICs or BCs change, numerical methods need to be recalculated, which is very time-consuming. DL-based methods with poor generalizability may produce incorrect estimations. Therefore, in this subsection, by carrying out a series of experiments, we demonstrate that our PIGNN generalizes well beyond training scenarios for varying PDE parameters, computational domains and mesh sizes (or spatial resolutions).

\begin{enumerate}
\smallskip
\item [\quad\textbf{(1)}] \textbf{Changing PDE parameters}
\smallskip
\end{enumerate}

Firstly, we change the PDE parameter in heat equation from $b=2$ to $b=8$. From Eq. \eqref{Eq4.3} and the definition of BC, we known that different parameters actually mean different PDE source terms $f(t,x,y)$ and BCs. Fig. \ref{fig7} shows a visual comparison between PIGNN, PINN and analytical solutions. It can be clearly seen that PINN is unable to produce correct results, while our PIGNN performs well even for long time steps.

\begin{figure}[!htbp]
\begin{center}
\scalebox{0.5}[0.5]{\includegraphics{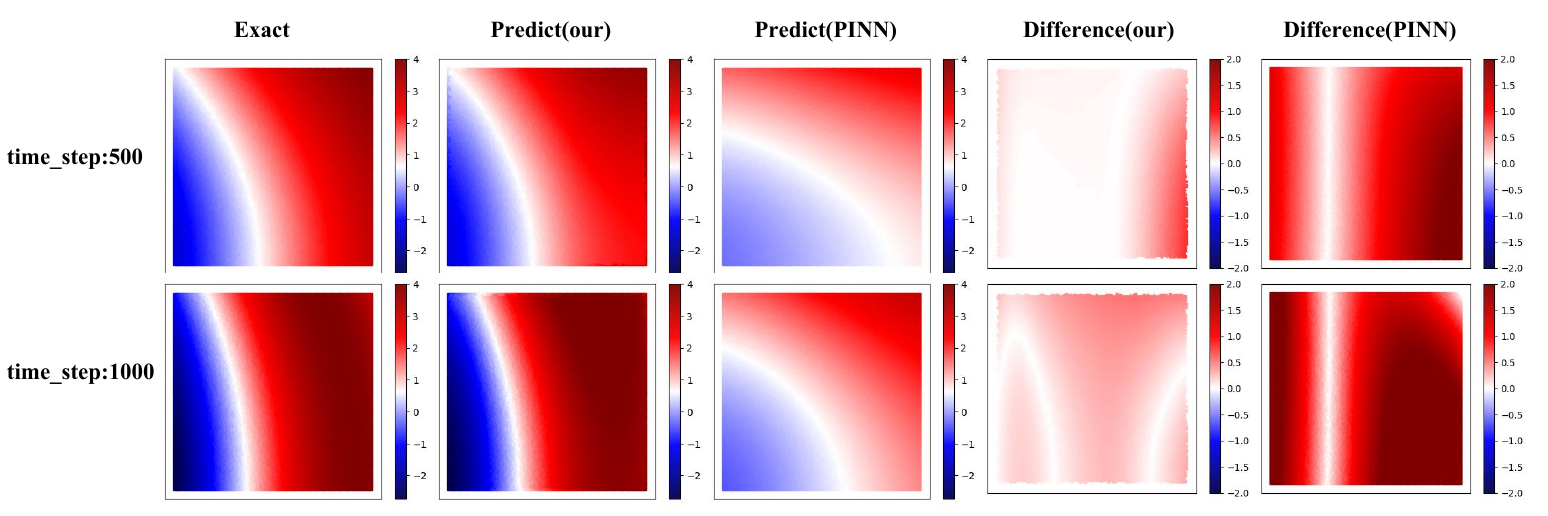}}
\caption{\footnotesize{Generalization performance of the heat equation with parameter $b$ changed from 2 to 8.
Computational domain: $[0,1]\times[0,1]$, mesh density=100.}}
\label{fig7}
\end{center}
\end{figure}

\begin{enumerate}
\smallskip
\item [\quad\textbf{(2)}] \textbf{Changing spatial resolutions}
\smallskip
\end{enumerate}

Secondly, we maintain the same size of computational domain and increase/decrease the mesh size, namely, the spatial resolution is finer/coarser than the default settings. Since PINN is meshless, changing resolutions of the PINN model implies adopting another sampling strategy. Figures \ref{fig8}-\ref{fig9} show snapshots of solutions to heat and Burgers equations predicted by PINN and PIGNN under finer and coarser resolutions at time steps 500 and 1500, respectively. For the finer resolution case, we increase the mesh density from 100 to 200, while in the coarser case, we decrease the mesh density from 100 to 50. As can be seen from the last two columns of each figure, the absolute error of PINN is larger than that of PIGNN for both equations. This phenomenon illustrates that our PIGNN is much more robust to changes in spatial resolution.

\begin{figure}[!htbp]
\centering
\subfigure[heat]{\includegraphics[width=0.95\linewidth]{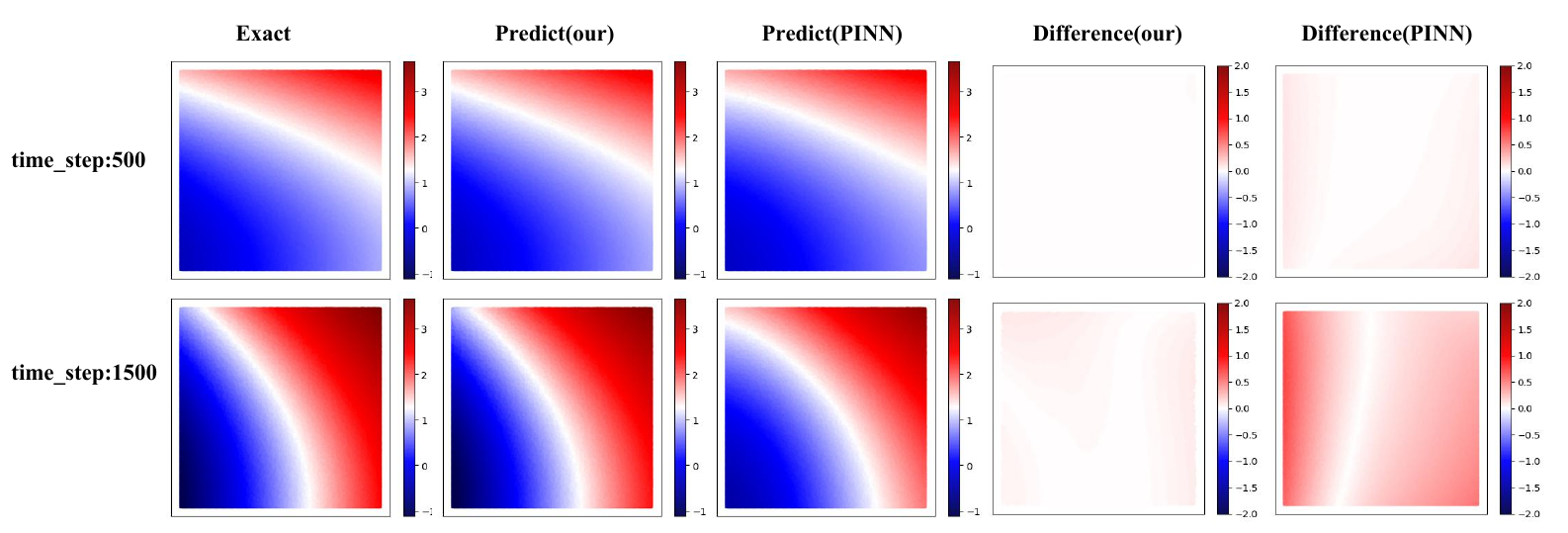}}
\subfigure[Burgers]{\includegraphics[width=0.95\linewidth]{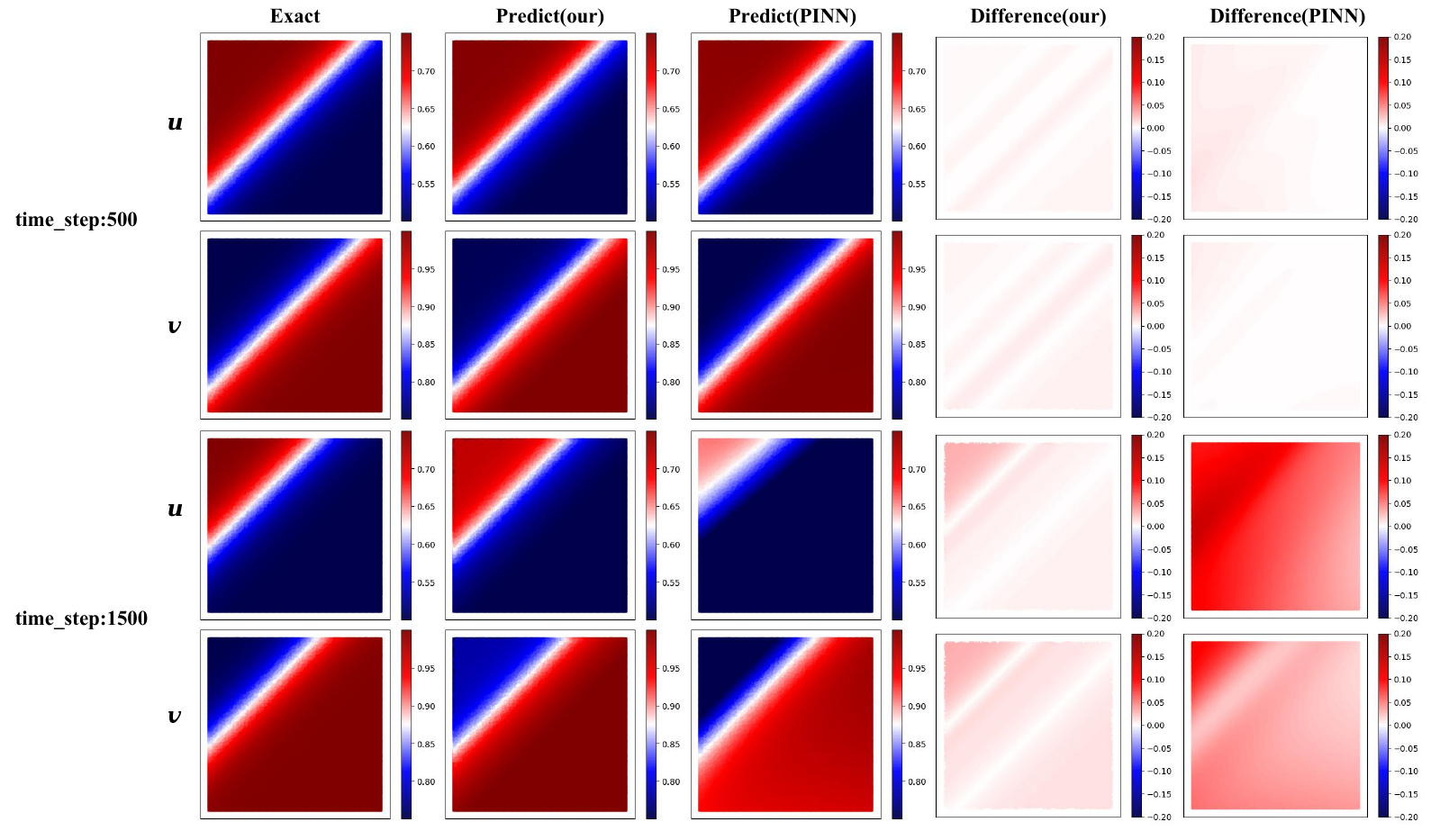}}
\caption{\footnotesize{Generalization performances of heat (top) and Burgers (bottom) equations under the default computational domain with finer spatial resolution. Computational domain: $[0,1]\times[0,1]$, mesh density=200 (100 by default).}}
\label{fig8}
\end{figure}

\begin{figure}[!htbp]
\centering
\subfigure[heat]{\includegraphics[width=0.95\linewidth]{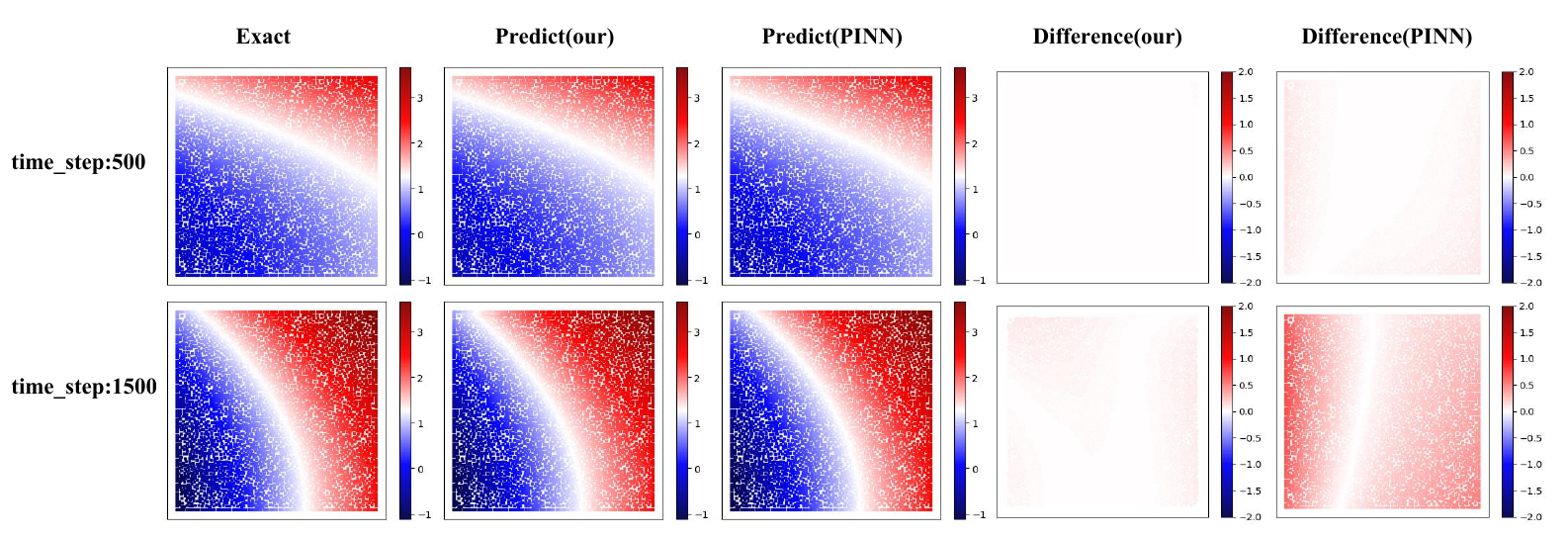}}
\subfigure[Burgers]{\includegraphics[width=0.95\linewidth]{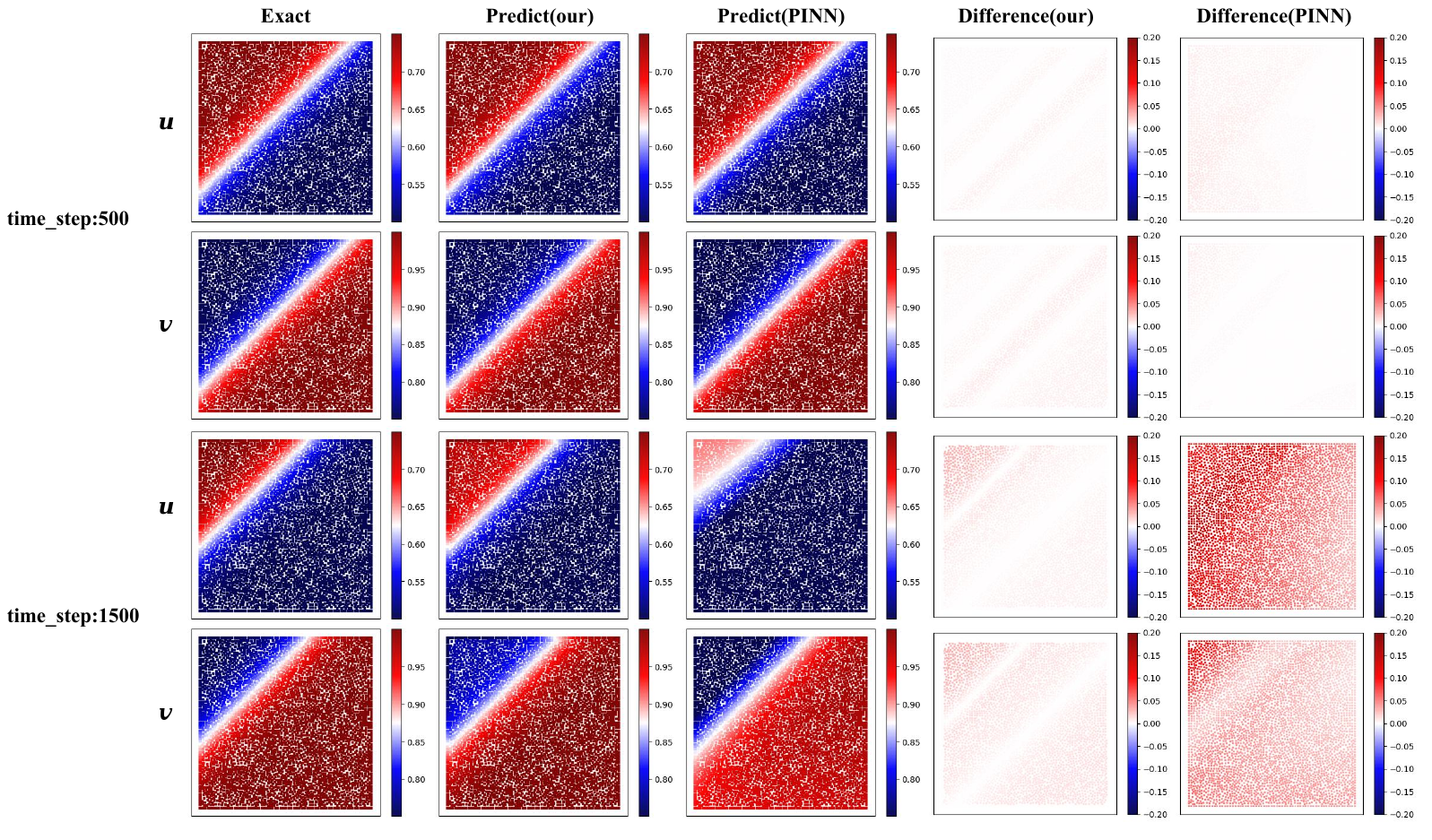}}
\caption{\footnotesize{Generalization performances of heat (top) and Burgers (bottom) equations under the default computational domain with coarser spatial resolution. Computational domain: $[0,1]\times[0,1]$, mesh density=50 (100 by default). }}
\label{fig9}
\end{figure}

\begin{enumerate}
\smallskip
\item [\quad\textbf{(3)}] \textbf{Enlarging computational domain}
\smallskip
\end{enumerate}

Next, we expand the computational domain by 4 times, keeping the same mesh size (spatial resolution), that is, the computational domain is enlarged from $[0,1]\times[0,1]$ to $[0,2]\times[0,2]$, and the mesh density is increased from 100 to 400. We predict solutions of heat and Burgers equations for 1000 and 1500 steps, and visualize the rollout performance in Fig. \ref{fig10}, respectively. The top subplot shows that for heat equation PINN is extremely sensitive to changes in computational domain and outputs completely wrong results, while predictions of PIGNN are consistent with the analytical reference. And from the bottom subplot of Fig. \ref{fig10}, we see that our PIGNN has smaller errors than PINN at all time steps for Burgers equation. This experiment demonstrates that our PIGNN is also robust to changes in computational domain.

\begin{figure}[!htbp]
\centering
\subfigure[heat]{\includegraphics[width=0.95\linewidth]{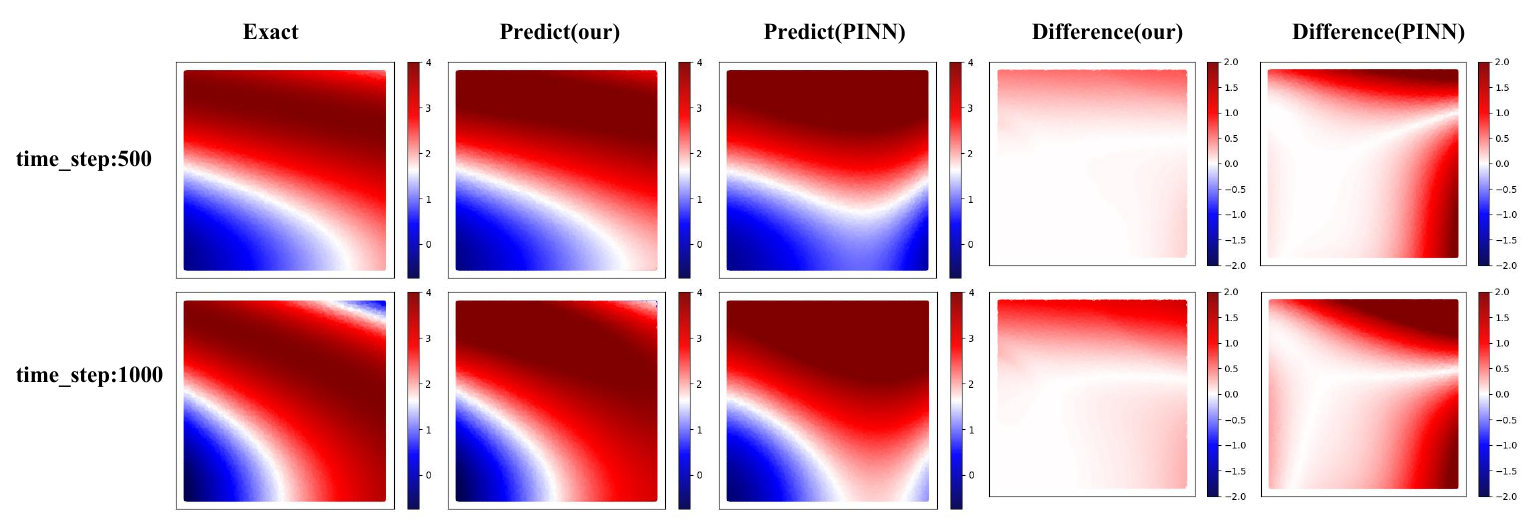}}
\subfigure[Burgers]{\includegraphics[width=0.95\linewidth]{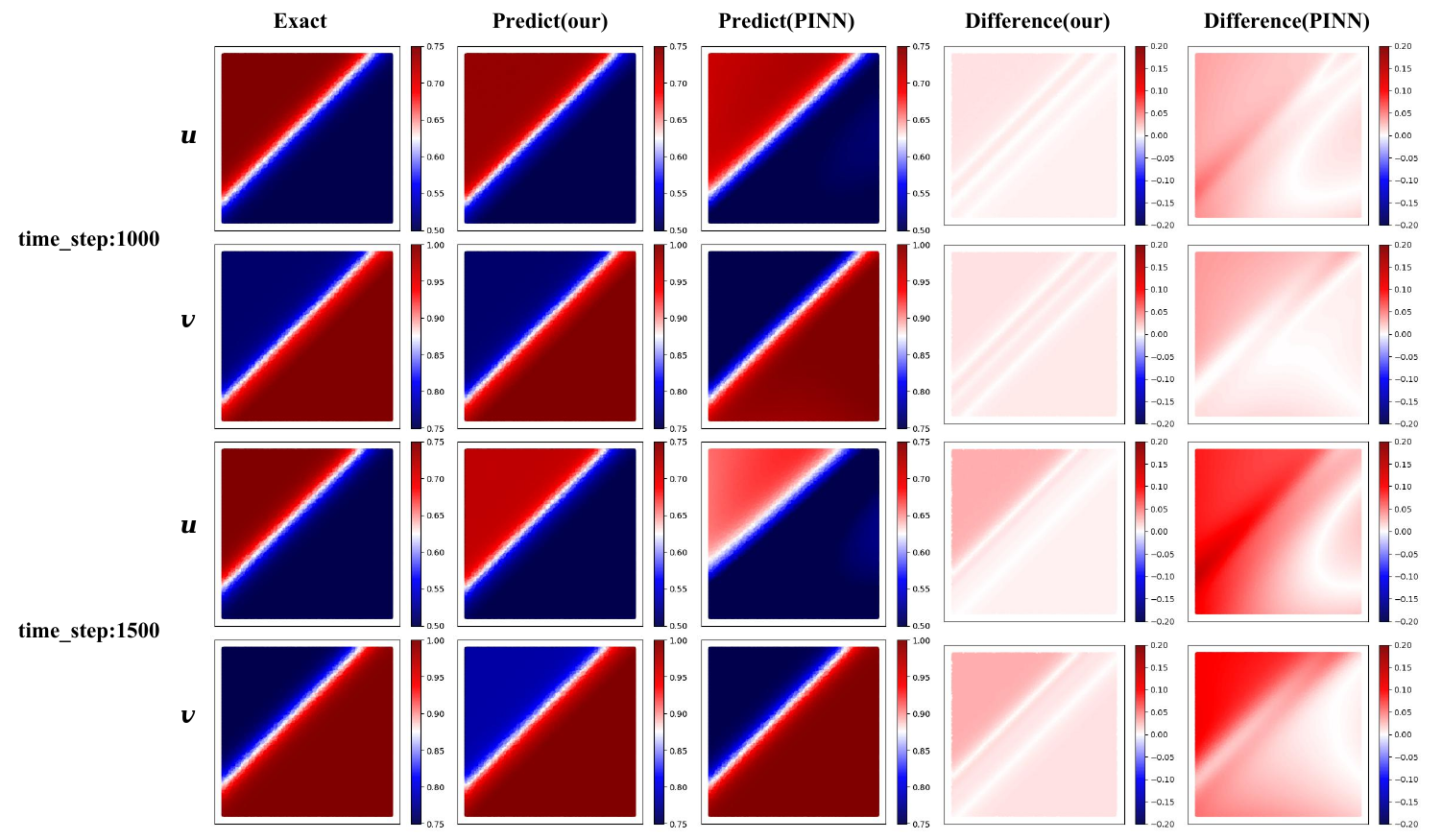}}
\caption{\footnotesize{Generalization performances of heat (top) and Burgers (bottom) equations under the enlarged computational domain with the same spatial resolution. Computational domain: $[0,2]\times[0,2]$, mesh density=400. Default settings: computational domain: $[0,1]\times[0,1]$, mesh density=100.}}
\label{fig10}
\end{figure}

\begin{enumerate}
\smallskip
\item [\quad\textbf{(4)}] \textbf{Enlarged computational domain with coarser spatial resolution}
\smallskip
\end{enumerate}

Furthermore, we expand the computational domain by 9 times, i.e., it is enlarged from $[0,1]\times[0,1]$ to $[0,3]\times[0,3]$, and let mesh density be 300 (100 by default). As a result, the spatial resolution becomes much coarser than default settings. Same as the previous experiment, we obtain 1000 and 1500 time steps predicted solutions of heat and burgers equations respectively and describe two time steps snapshots in Fig. \ref{fig11}. Similar to previous results, our PIGNN presents outstanding fitness to analytical solutions of both equations, while PINN can simulate part of dynamical patterns of the Burgers equation, but hardly in the heat equation.

\begin{figure}[!htbp]
\centering
\subfigure[heat]{\includegraphics[width=0.95\linewidth]{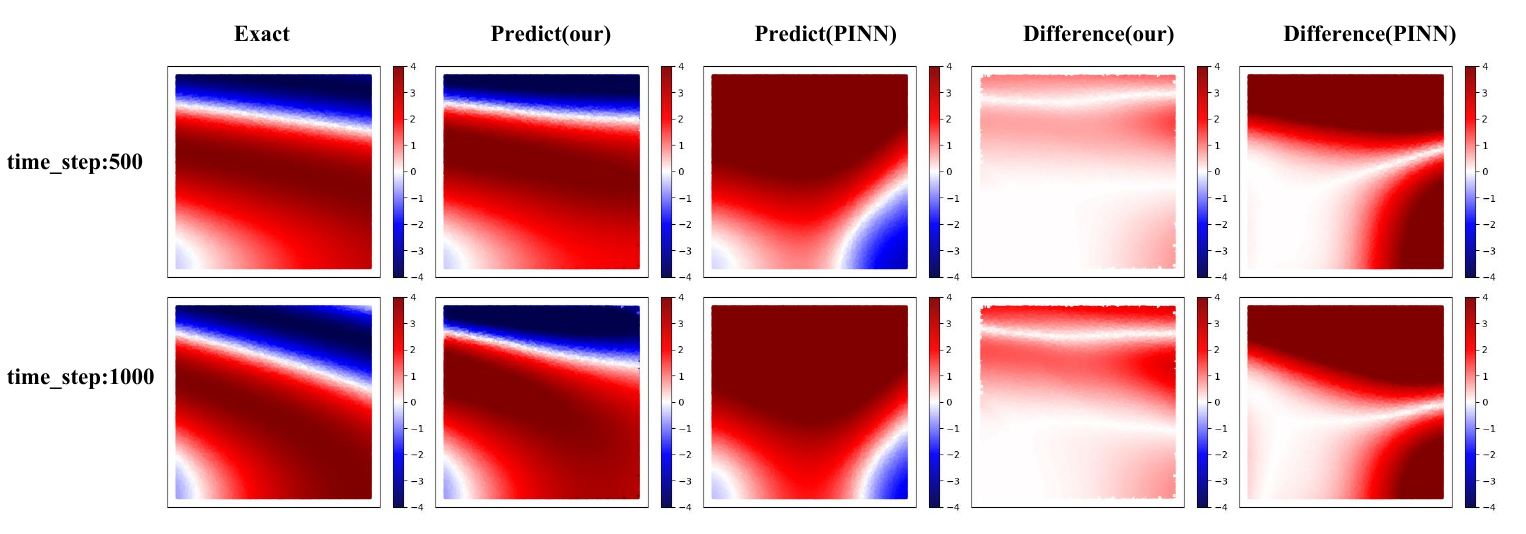}}
\subfigure[Burgers]{\includegraphics[width=0.95\linewidth]{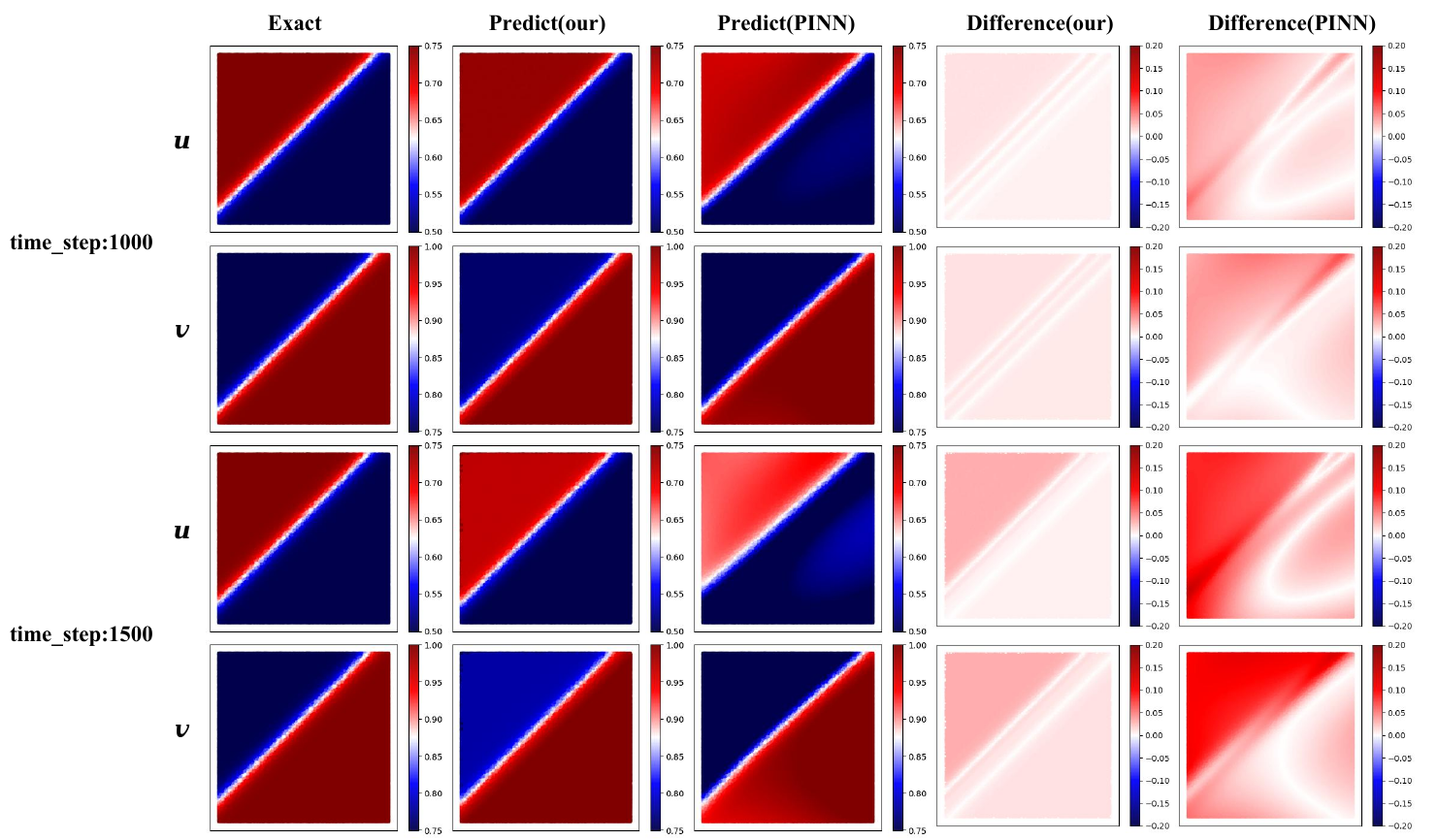}}
\caption{\footnotesize{Generalization performances of heat (top) and Burgers (bottom) equations under the enlarged computational domain with coarser spatial resolution. Computational domain: $[0,3]\times[0,3]$, mesh density=300. Default settings: computational domain: $[0,1]\times[0,1]$, mesh density=100.}}
\label{fig11}
\end{figure}

\begin{enumerate}
\smallskip
\item [\quad\textbf{(5)}] \textbf{Enlarged computational domain with finer spatial resolution}
\smallskip
\end{enumerate}

Moreover, we expand the computational domain by 2 times, i.e., it is enlarged from $[0,1]\times[0,1]$ to $[0,2]\times[0,1]$, and let mesh density be 400 (100 by default). Namely, the spatial resolution is finer than default settings. Fig. \ref{fig12} shows the visual predictions of PIGNN and PINN for heat and Burgers equations in comparison with the reference. As in previous three experiments, results in this case show similar patterns, and we don't go into details here.

In addition, in Fig. \ref{fig13}, we depict the aRMSE curves of heat (top) and Burgers (bottom) equations separately for all spatial resolution cases in enlarged computational domains. Let's first explain the meaning of the legend in the figure. `initial' represents the case with default settings, `remain' means the case in the enlarged computational domain ($[0,2]\times[0,2]$) with the same mesh size (density is 400), `coarser' corresponds to the case in the enlarged computational domain ($[0,3]\times[0,3]$) with large mesh size (density is 300), and `finer' refers to the case in the enlarged computational domain ($[0,2]\times[0,1]$) with small mesh size (density is 400). Dashed lines represent the error curves of PINN, while solid lines are curves of PIGNN. As can be seen from the above subgraph, for each case of the heat equation, the error curve of PIGNN is always lower than the corresponding PINN curve. Comparing all the PIGNN error curves, the default case has the smallest error, while the 'coarser' case possesses the largest error. However, from the subgraph below we see that for the Burgers equation, the phenomenon is a little different. For all PIGNN cases, the error curves do not vary too much and are concentrated around the default error curve. While for PINN cases, the default error curve is lower than that of PIGNN at first and then increases dramatically. After half of the time step, all PINN error curves increase rapidly, but PIGNN's error curves tend to be flat steadily. The results in this figure further validate the robustness of PIGNN to changes in computational domain and spatial resolution.

\begin{figure}[!htbp]
\centering
\subfigure[heat]{\includegraphics[width=0.95\linewidth]{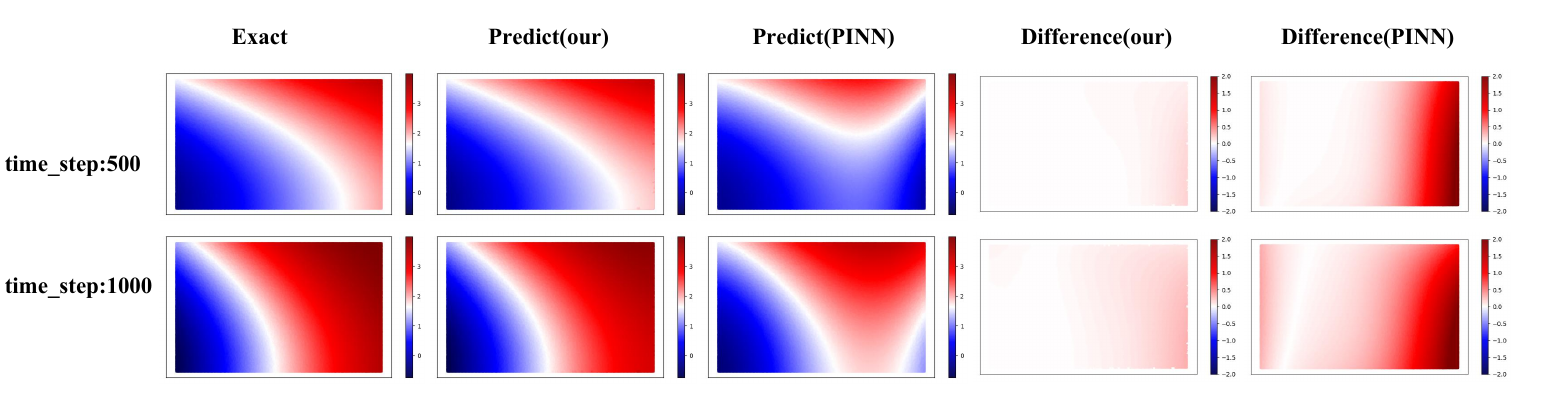}}
\subfigure[Burgers]{\includegraphics[width=0.95\linewidth]{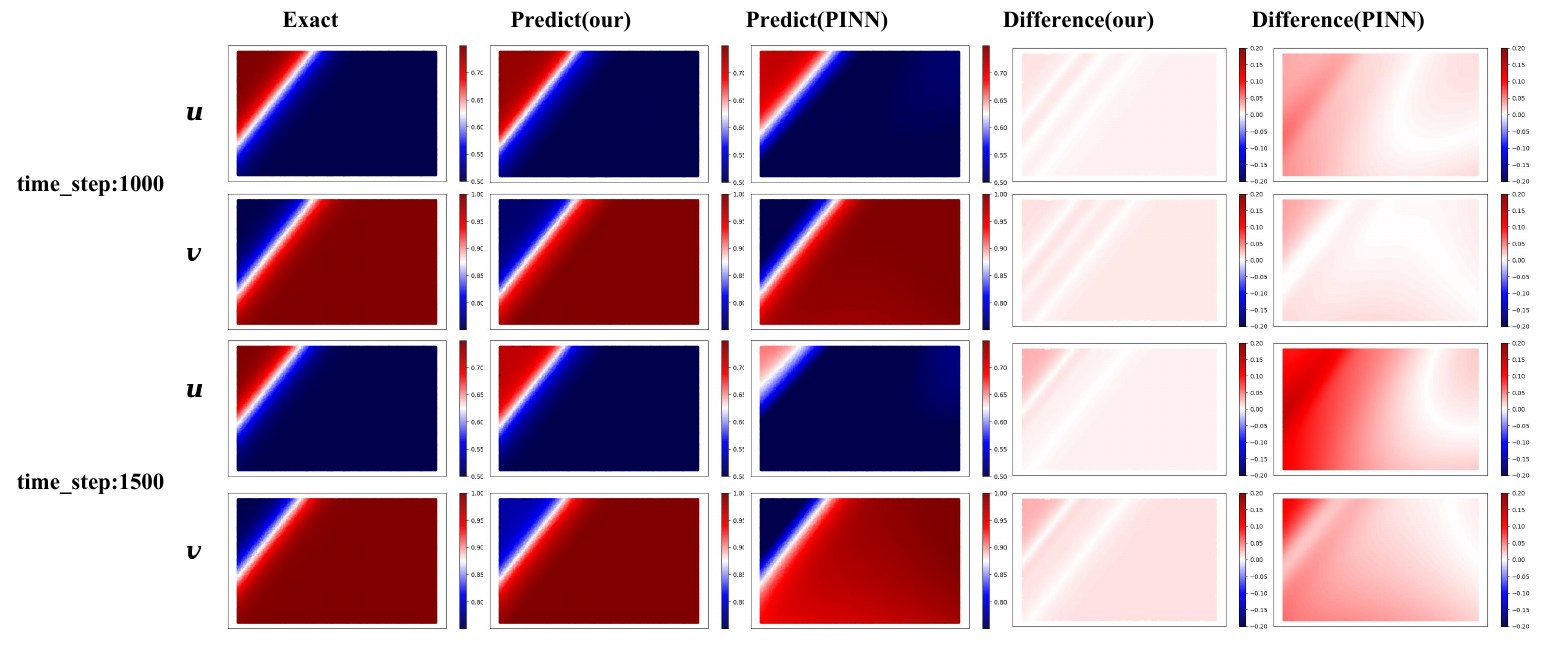}}
\caption{\footnotesize{Generalization performances of heat (top) and Burgers (bottom) equations under the enlarged computational domain with finer spatial resolution. Computational domain: $[0,2]\times[0,1]$, mesh density=400. Default settings: computational domain: $[0,1]\times[0,1]$, mesh density=100.}}
\label{fig12}
\end{figure}

\begin{figure}[!htbp]
\centering
\subfigure{\includegraphics[width=0.495\linewidth]{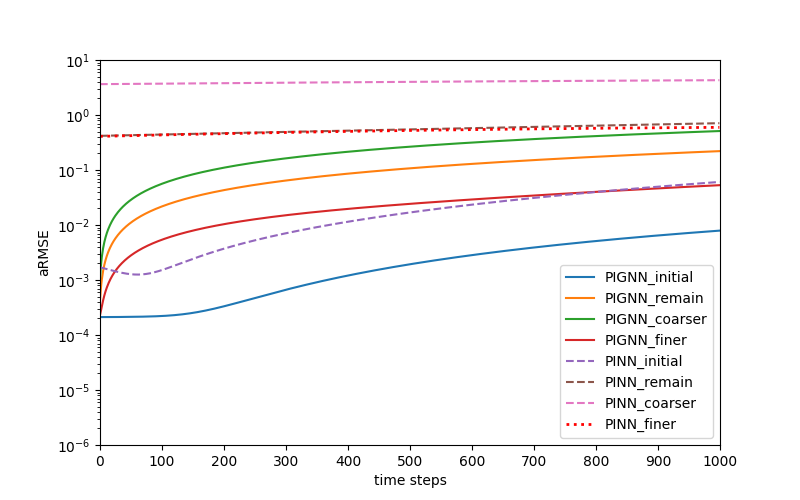}}
\subfigure{\includegraphics[width=0.495\linewidth]{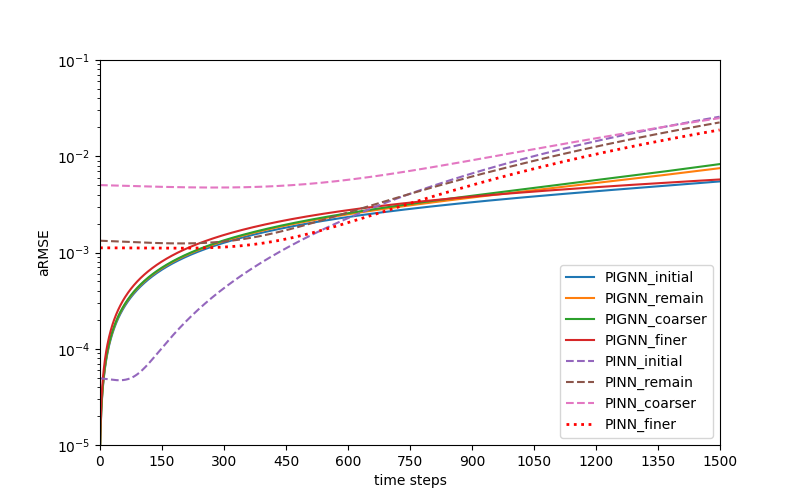}}
\caption{\footnotesize{Comparison of accumulative error curves for heat (left) and Burgers (right) equations under different computational domains with different spatial resolutions.}}
\label{fig13}
\end{figure}

\begin{enumerate}
\smallskip
\item [\quad\textbf{(6)}] \textbf{Changing computational domain, spatial resolution and PDE parameters simultaneously}
\smallskip
\end{enumerate}

In the last part of this subsection, we expand the computational domain by 2 times with finer spatial resolution for heat and Burgers equations. That is, the computational domain is enlarged from $[0,1]\times[0,1]$ to $[0,1]\times[0,2]$ and the mesh density is increased from 100 to 300. We set $b=6$ (2 by default) and $R_e=60$ (80 by default) in the heat and Burgers equations, respectively. For the FN equation, to further demonstrate the fitting ability of PIGNN under complex settings in large domains, we train the PIGNN model with four groups of parameters at the default settings, and then change the IC with the following complex expression when testing
\begin{eqnarray*}
u &=& \sin[200\pi(x+y)+0.9]+1.5\sin[200\pi xy(x-y)] + 1.4\cos(\pi y),\\
v &=& 0.9\sin[200\pi xy(x-0.5)(x-y)+0.9] + 1.1\sin[50\pi(x-0.5)y + 1.4].
\end{eqnarray*}
Meantime, we expand the computational domain by 4 times, $[-1,1]\times[-1,1]$ and increase the mesh density to 1000.

All predictions are pictured in Figs. \ref{fig14} and \ref{fig15}. It appears from Fig. \ref{fig14} that, even when we change the PDE parameter, computational domain and spatial resolution simultaneously, our PIGNN still gives satisfactory predictions for both heat and Burgers equations. PINN does not perform well in the heat equation, but produces relatively better results in the Burgers equation. Frankly speaking, the error in the current case is larger than the error when changing the computational domain, spatial resolution or PDE parameter separately. Fig. \ref{fig15} shows that PIGNN is able to infer the high-frequency dynamic patterns never seen during training, suggesting that our model has the potential to generalize to more complex situations.

We think the above prominent strength comes from the architecture design of our graph network and appropriate choices of node features, which is conductive to learning local dynamic patterns of systems. Furthermore, by forcing the network to make next-step prediction on irregular meshes with distance-relative edge features, our model is encouraged to learn resolution-independent physics. In order to clearly show the results of FN equation with high-frequency dynamics, we provide an amplified version of result $\bm u$ at time step 80. It should be remarked here that since the CPU resources are already exhausted when the numerical software is working with a mesh density of 1000, we have no reference results in this case. This experiment also demonstrates the availability and substitutability of our PIGNN compared with numerical softwares, that is, our model is an alternative tool when the numerical software cannot perform calculations due to the large amount of computations on the same computer configuration.

\begin{figure}[!htbp]
\centering
\subfigure[heat]{\includegraphics[width=0.95\linewidth]{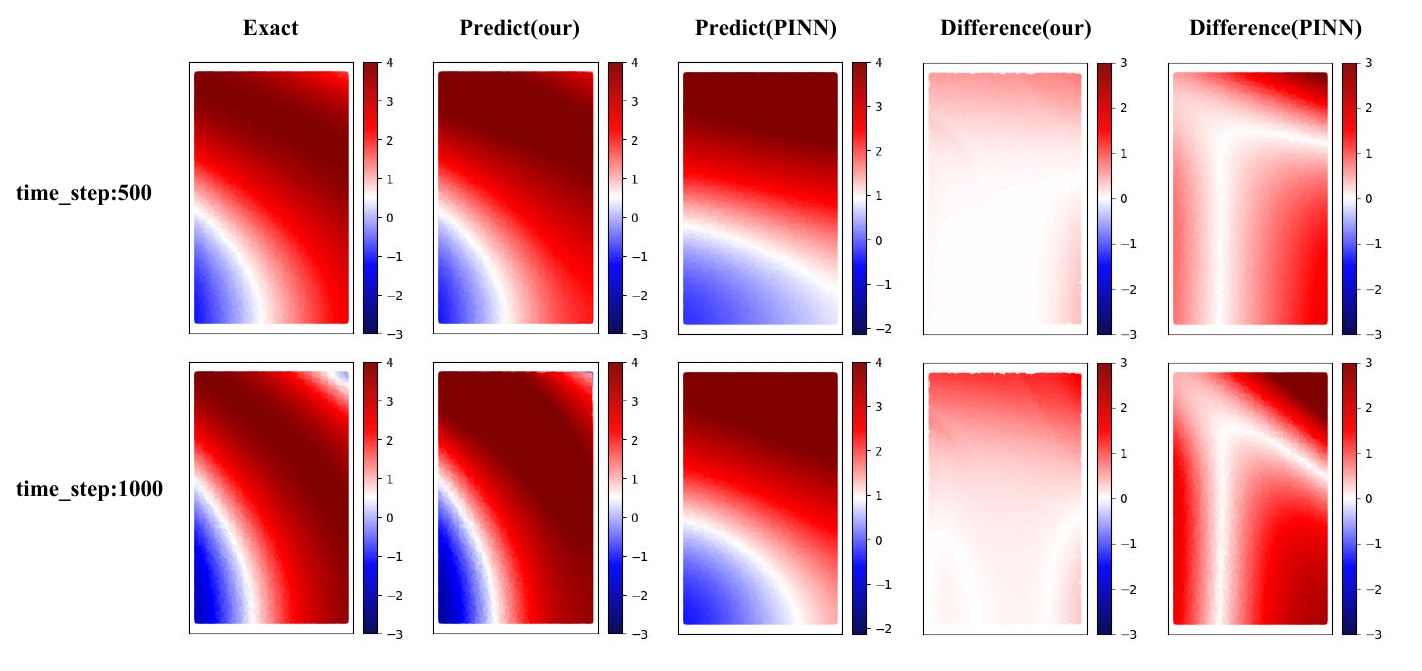}}
\subfigure[Burgers]{\includegraphics[width=0.95\linewidth]{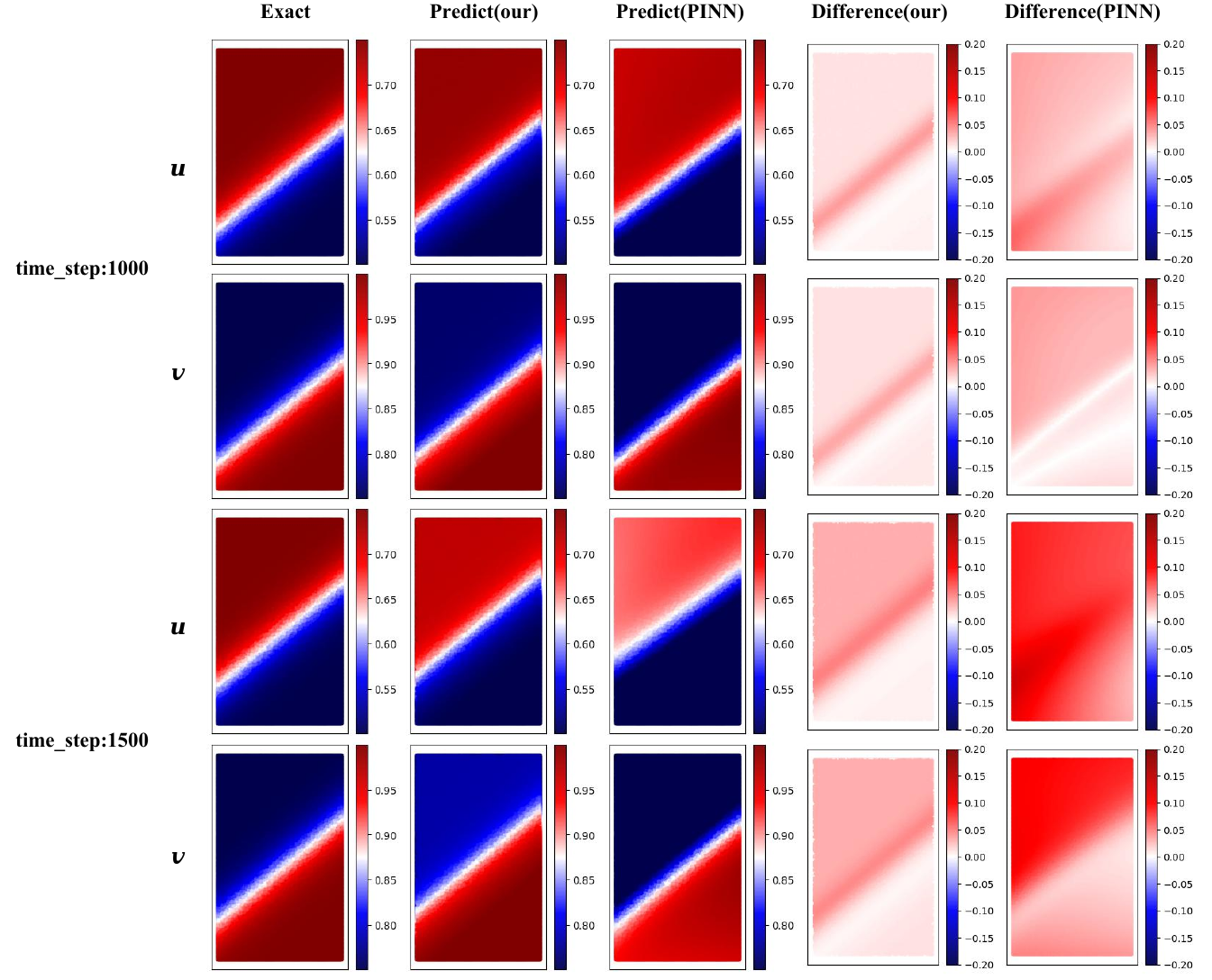}}
\caption{\footnotesize{Generalization performance of heat (above) and Burgers (below) equations with different computational domain, spatial resolution and PDE parameter. heat: $b=6$, Burgers: $R_e=60$. Computation domain: $[0,1]\times[0,2]$, mesh density=300. Default settings: computational domain: $[0,1]\times[0,1]$, mesh density=100, $b=2$, $R_e=80$.}}
\label{fig14}
\end{figure}

\begin{figure}[!htbp]
\begin{center}
\scalebox{0.52}[0.52]{\includegraphics{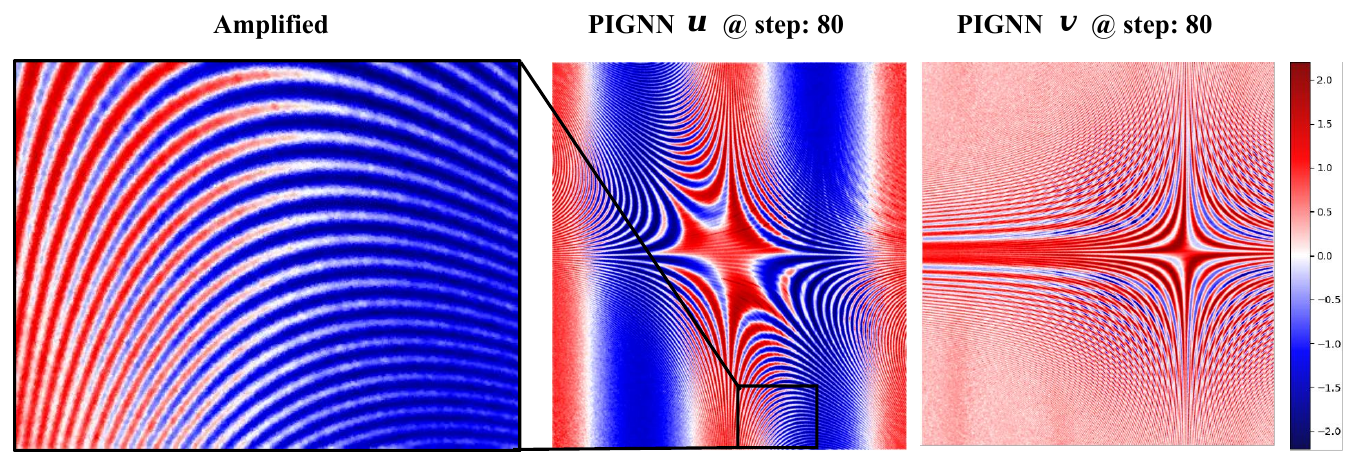}}
\caption{\footnotesize{Generalization performance of the FN equation with different computational domain, spatial resolution and IC. Computation domain: $[-1,1]\times[-1,1]$, mesh density=1000. Default settings: computational domain: $[0,1]\times[0,1]$, mesh density=100.}}
\label{fig15}
\end{center}
\end{figure}

\subsubsection{Scalability on the forward problem}
\noindent{\textbf{\\}}

In this subsection, taking the Burgers equation as an example, we present parallel results of a large-scale PIGNN for distributed multi-GPU systems to demonstrate the scalability of our PIGNN. Since the focus of this paper is not the distributed parallelism of large-scale graph neural networks, technical details of parallel computing will not be introduced here, and we only show the results. From an implementation point of view, the parallelism of the large-scale PIGNN is simpler than other parallel algorithms because the model structure remains the same. Only the graph partition, the exchange and synchronization of boundary information of each subgraph need to be considered.

In this experiment, the computational domain is $[0,28]\times[0,28]$, and the mesh density is 2800, resulting in a total number of nodes of 12170332. The graph is divided into four subgraphs so that the number of nodes in each subgraph is as close as possible. Fig. \ref{fig16} shows snapshots of solutions and differences for the large-scale problem in the training and extrapolation phase at time steps 5 and 1500. In order to clearly show the results of such high resolution, we provide an amplified version of result $\bm v$ at time step 5 in Fig. \ref{fig17}. As can be seen from Fig. \ref{fig16} that the error is small even at very long time steps. This experiment verifies that our PIGNN can be well scaled to distributed systems to deal with large-scale problems.

\begin{figure}[!htbp]
\begin{center}
\scalebox{0.45}[0.45]{\includegraphics{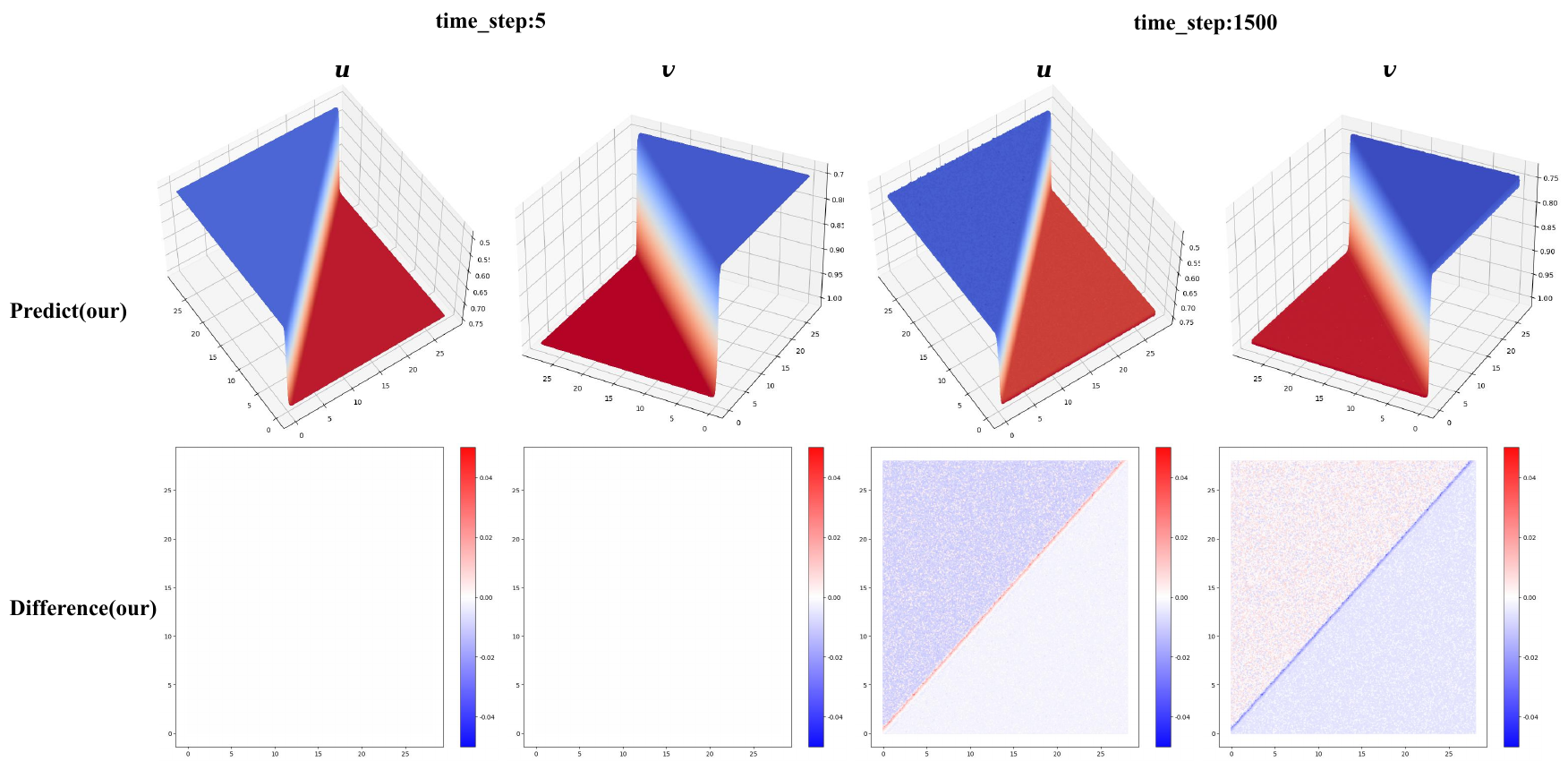}}
\caption{\footnotesize{Generalization performance of large-scale PIGNN for Burgers equation. Computation domain: $[0,28]\times[0,28]$, mesh density=2800. Default settings: computational domain: $[0,1]\times[0,1]$, mesh density=100.}}
\label{fig16}
\end{center}
\end{figure}

\begin{figure}[!htbp]
\begin{center}
\scalebox{0.45}[0.45]{\includegraphics{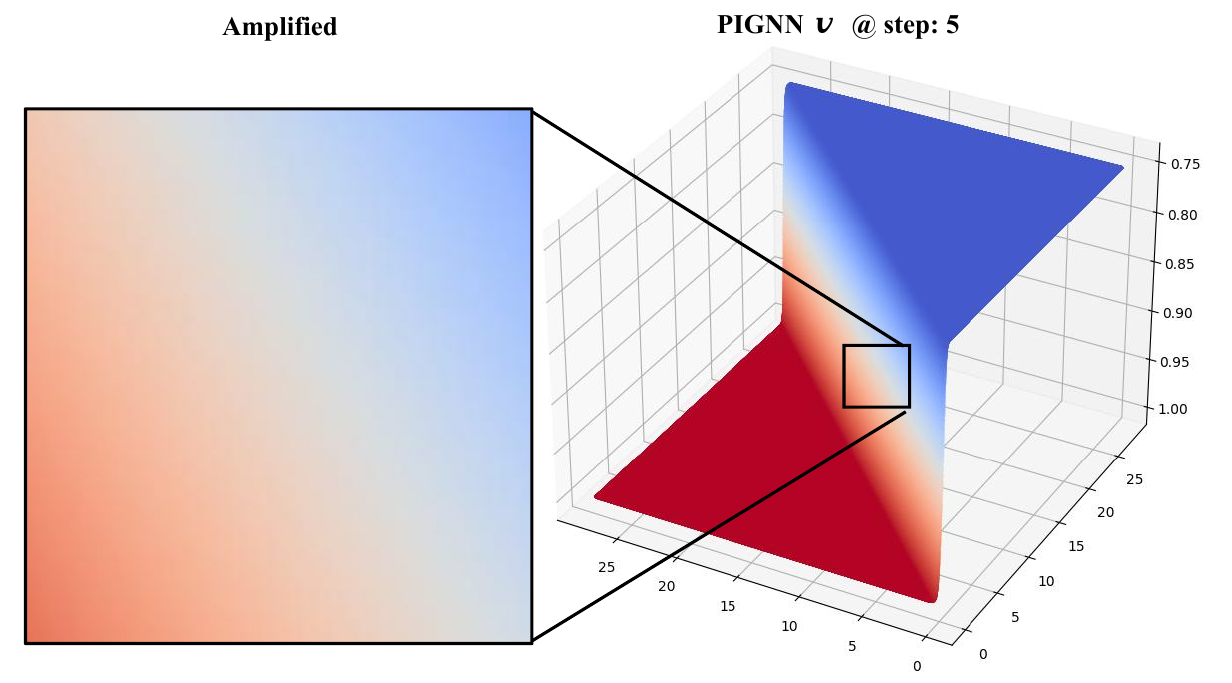}}
\caption{\footnotesize{The amplified result $\bm v$ of large-scale PIGNN at time step 5 for the Burgers equation.}}
\label{fig17}
\end{center}
\end{figure}

\subsubsection{Inverse problem}
\noindent{\textbf{\\}}

For the inverse problem, we take a simple heat equation as an example for explanation. We choose the following equation,
\begin{eqnarray}\label{Eq5.1}
\frac{\partial u(x,y,t)}{\partial t} &=& \Delta u(x,y,t) + f(x,y,t), \\
f(x,y,t) &=& -kx\sin[ktx+y] + \cos(ktx+y)(k^2t^2+1), \nonumber\\
(x,y)\in\Omega &=& [0,1] \times [0,1],\,\, t>0, \nonumber
\end{eqnarray}

subject to the initial and boundary conditions
\begin{eqnarray*}
u(x,y,0) &=& \cos(y), \qquad\qquad\qquad\,\,\, (x,y)\in\Omega,\\
u(x,y,t) &=& \cos(4tx+y),\quad (x,y)\in\partial \Omega,\, t>0.
\end{eqnarray*}

The analytical solution is
\begin{equation}\label{Eq4.4}
u(x,y,t) = \cos(ktx+y).
\end{equation}

To highlight the ability of PIGNN of learning from scarce training data, we randomly sample 100 points from the computational domain, accounting for only 0.67\% of total nodes, and select the corresponding analytical solutions as observations to form the training dataset. Fig. \ref{fig18} shows the training process of our PIGNN under three different initial values of the unknown parameter $k$. It is clear from the figure that for arbitrary initial value, all curves converge to the same value as the model training process progresses, which implies that our PIGNN can identify the true parameter.

\begin{figure}[!htbp]
\begin{center}
\scalebox{0.52}[0.52]{\includegraphics{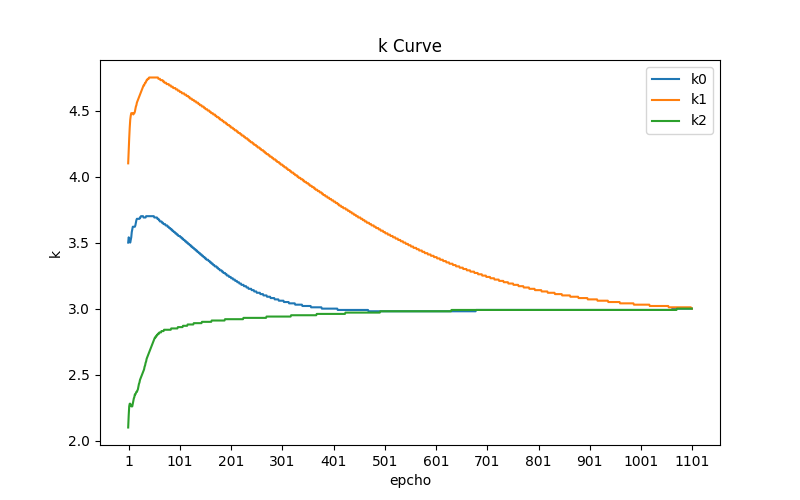}}
\caption{\footnotesize{Training process of PIGNN under different initial values of unknown parameter of heat equation.}}
\label{fig18}
\end{center}
\end{figure}

\subsubsection{Ablation study}
\noindent{\textbf{\\}}

In this subsection, two additional experiments are carried out to demonstrate the influences of the number of GN blocks and the choice of node features on model's performance.

\begin{enumerate}
\smallskip
\item [\quad\textbf{(1)}] \textbf{Effect of the number of GN blocks}
\smallskip
\end{enumerate}

In this experiment, we choose three sets of heat equation parameters $b=-2,0.9,8$, and test the performance of our PIGNN under one and two GN blocks, respectively. Fig. \ref{fig19} describes the aRMSE curve with respect to time steps for each case. It can be seen that for the same parameter value, the error curve of one GN block is consistently lower than that of two GN blocks, but the gap is quite small. We think this is due to the fact that each node is only sensitive to changes of its one-ring neighbours, and model with one GN block is sufficient enough to learn such dynamic patterns.

\begin{figure}[!htbp]
\begin{minipage}[t]{0.485\linewidth}
\centerline{\includegraphics[width=1.15\textwidth]{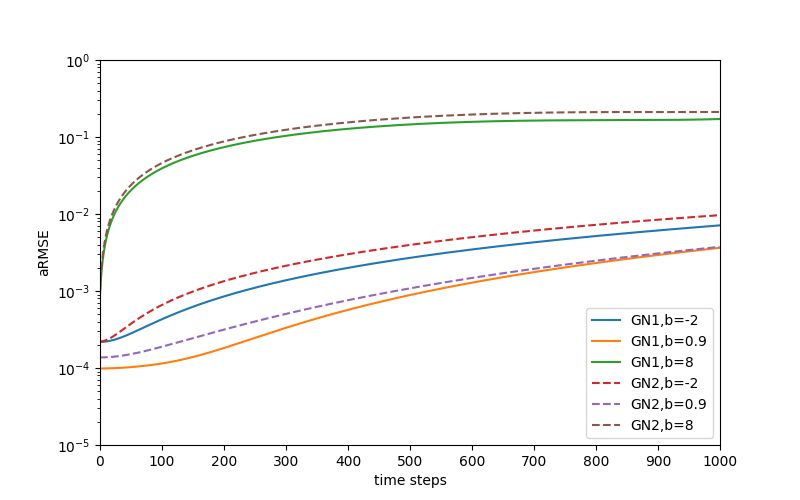}}
\caption{\footnotesize{ Generalization performance of heat equation with different number of GN blocks and PDE parameters. }}
\label{fig19}
\end{minipage}
\hfill
\begin{minipage}[t]{0.485\linewidth}
\centerline{\includegraphics[width=1.15\textwidth]{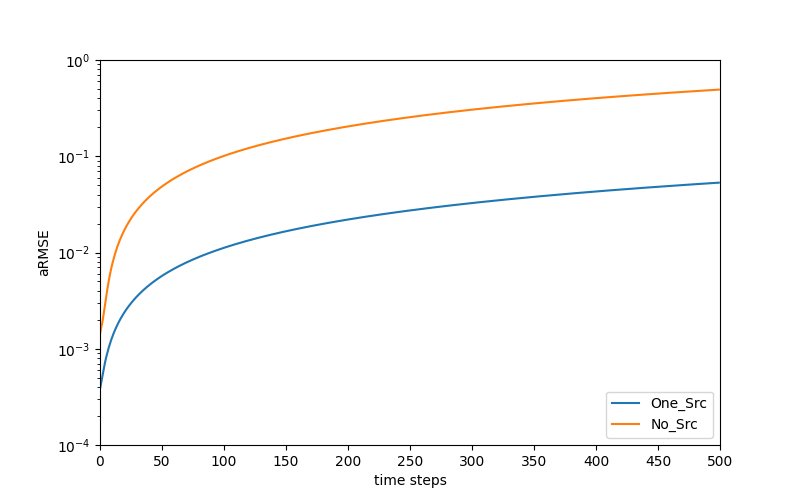}}
\caption{\footnotesize{Comparison of aRMSE curves of heat equation with different choices of node features.}}
\label{fig20}
\end{minipage}
\end{figure}

\begin{enumerate}
\smallskip
\item [\quad\textbf{(2)}] \textbf{Importance of appropriate node features}
\smallskip
\end{enumerate}

Here, we are going to demonstrate the importance of choosing appropriate node features. As we explained in subsection \ref{subsec2.4}, if the source term of equation is nonzero, we have to add it to node features to capture the spatial dynamic pattern. We test this selecting rule by choosing two sets of node features. The first case refers to the default setting, that is, node features contain node solutions and node type vectors. In the second case, besides the default setting, the source term $f(t,x,y)$ in equation \eqref{Eq4.3} is added to node features. We term these two cases: No\_src and One\_src. The aRMSE curves for both cases are presented in Fig. \ref{fig20}. Furthermore, we provide two representative snapshots in Fig. \ref{fig21} to visualize the impact of different node features on PIGNN's predictions. Figure \ref{fig20} clearly shows that the error of No\_src is higher than that of the other case. The results of Figure \ref{fig21} further confirm this conclusion, where the results of No\_src show that the model without additional source feature rarely learns the dynamics of the system, while the model with One\_src has successfully learned.

\begin{figure}[!htbp]
\begin{center}
\scalebox{0.56}[0.56]{\includegraphics{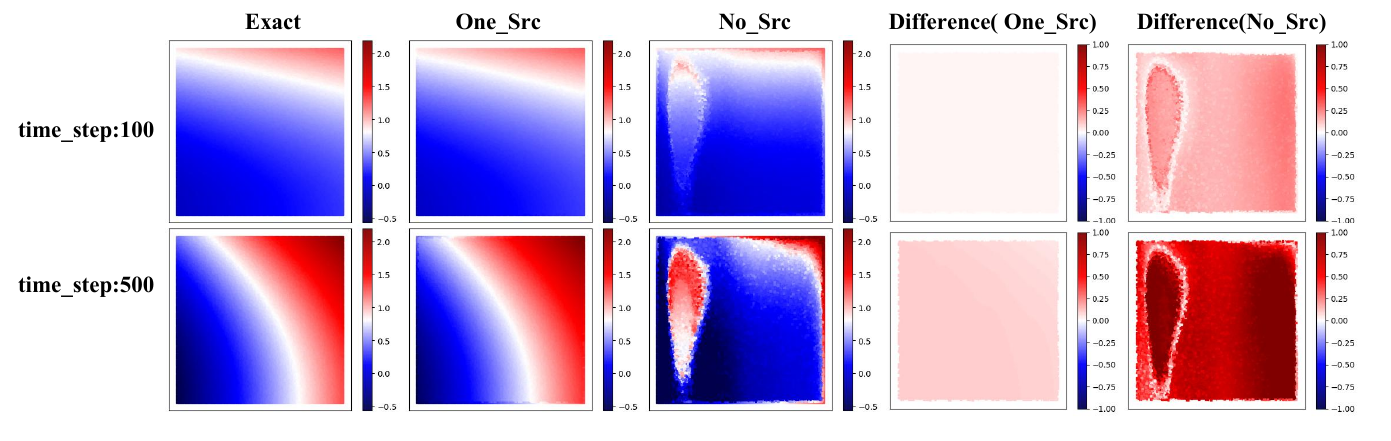}}
\caption{\footnotesize{Generalization performance of the heat equation with different choices of node features.}}
\label{fig21}
\end{center}
\end{figure}

In the final part of this article, we make an overall comparison between our PIGNN and PINN from 10 aspects to further demonstrate the outstanding properties of our PIGNN. PIGNN can accurately model various PDE systems with good time extrapolability and can be well generalized to different computational domains, spatial resolutions (mesh size) and PDE parameters simultaneously. For details, please see Table \ref{tab2}.

\begin{small}
\begin{table}[!htbp]
\renewcommand\arraystretch{1.3}
\def\baselinestretch{1.0}
\caption{General comparison between PIGNN and PINN.}
\label{tab2}
\centering
\resizebox{\textwidth}{!}{
\begin{tabular*}{1.0\textwidth}{@{\extracolsep{\fill}}lcccc}
\toprule
                              & PIGNN                & PINN              \\
\midrule
number of training samples    &  0                   &    0               \\ \hline
mesh                          &  regular/irregular   &  meshless         \\ \hline
mesh size                     &    flexible          &    ---             \\ \hline
problem scale                 &    big               &    small           \\ \hline
inference error               &   $10^{-2}$          &    $10^{-2}$           \\ \hline
\multirow{2}{*}{extension of computational domain} &   extend significantly    &    \multirow{2}{*}{extend moderately}       \\
 & to huge domain   &    \\\hline
extension of time steps       &   extend plenty of time steps   &    ---      \\ \hline
change PDE parameters         &   YES   &    NO       \\ \hline
inverse problem               &   YES   &    YES       \\\hline
\multirow{2}{*}{conclusion}                   &   Small area training,    &  training area $\approx$       \\
&  large area application.  &  application area \\
\bottomrule
\end{tabular*}}
\end{table}
\end{small}

\section{Conclusion}\label{sec5}
\noindent{\textbf{\\}}

In this paper, we propose a novel PIGNN framework for solving both the forward and inverse problems governed by spatiotemporal PDEs. Thanks to the integration of graph neural networks and physics-informed loss function, the proposed PIGNN can naturally handle irregular meshes that can fit any geometry, as well as learn dynamics of the underlying system. We demonstrate the effectiveness and excellent performance of our PIGNN in terms of accuracy, time extrapolability and generalizability through a series of numerical experiments. Promising numerical results show that PIGNN outperforms the neural-based solver PINN. Most importantly, PIGNN only needs to be trained in small domains under simple settings to have excellent fitting ability, and then it can be directly applied to more complex situations with large domains, which further sheds lights on the superiority of the proposed model. Moreover, we show that PIGNN can scale well to distributed system to handle large-scale problems. The outstanding performance of PIGNN suggests that it has promising potential to serve as a reliable surrogate model to learn physical dynamics described by PDEs or as a neural-based solver to infer solutions for PDEs. Furthermore, our work reveals that integrating deep learning techniques, scientific knowledge and numerical methods together can facilitate the entire modelling and learning process of complex systems.

%

\section*{Acknowledgements}
\noindent{\textbf{\\}}

Zhang Hao laid the mathematical foundation of the project, designed the main algorithms and supervised the entire project. Jiang Longxiang implemented, trained and tested most of the code and conducted some experiments. Wang Liyuan wrote and revised the whole manuscript, optimized the training process and completed most of the numerical experiments. Chu Xinkun revised the writing of the whole manuscript and provided support to the design of some experiments. Wen Yong and Li Luxiong helped to do some numerical experiments. Xiao Yonghao helped with arrangements of the project. Thanks for all the efforts we have done to accomplish this paper. The present research was supported by Sichuan Province Science Foundation for Youths under Grant Number 2023NSFSC1419.

\section*{Appendix}

\subsection*{A. Proof of Lemma 3.2.}
\begin{proof}
Let $l_i$ be the order of $\varphi_i\,(i=1,\cdots,n)$, i.e., $\varphi_i=x^{s}y^{l_i-s},\,\,{\rm for}\,\,{\rm some}\,s\in[0,l_i]$, then we have
\begin{equation}\label{EqB.1}
\varphi_i(r\bm{{\rm x}}_k)=r^{l_i}\varphi_i(\bm{{\rm x}}_k),\,\forall \,\,k\in1,\cdots,p.
\end{equation}

Partitioning matrix $\bm{\Phi}$ according to the order of $\varphi_i$, i.e., $\bm{\Phi}=\big(\bm{\Phi}_{1}^{\rm T},\cdots, \bm{\Phi}_{m}^{\rm T}\big)^{\rm T}$,
where $\bm{\Phi}_{j}\,(j\in1,\cdots,m)$ is the submatrix of $\bm{\Phi}$ formed by taking all rows of entries $\varphi_i$ with order of $j$, partitioning vector $\bm{{\rm Y}}$ accordingly. By the definition of $\bm{{\rm Y}}$ and $\varphi_i$, $\bm{Y}_{i}\neq\bm{0}$ only for $i=2$.

Let $\bm{\eta}=r^2\bm{\omega}$, by Eq. \eqref{EqB.1}, \eqref{Eq3.9.4} can be written as
\begin{equation}\label{EqB.2}
\bm{{\rm Y}}=\bm{\Lambda}_r\bm{\Phi}\bm{\eta},
\end{equation}
where $\bm{\Lambda}_r = {\rm diag}(r^{-1},1,r,\cdots, r^{m-2})$.

Let $\bm{\eta}_r\in\mathbb R^p$ be the least square solution of Eq. \eqref{EqB.2}.

1) Let us first study that $\bm{\eta}_r$ is bounded.

By Eq. \eqref{EqB.2},
\begin{equation}\label{EqB.3}
\|\bm{{\rm Y}}-\bm{\Lambda}_r\bm{\Phi}\bm{\eta}\|^2 =
r^{-2} \|\bm{\Phi}_{1}{\bm\eta}\|^2
+ \|\bm{{\rm Y}}_2-\bm{\Phi}_{2}{\bm\eta} \|^2
+ \cdots
+ r^{2(m-2)}\|\bm{\Phi}_{m}{\bm\eta} \|^2.
\end{equation}
\quad Suppose $\bm{\eta}_0$ is an approximation of Laplacian $\Delta$ with the largest order of $l$. According to definition \ref{def1}, $\bm{{\rm Y}}_i-\bm{\Phi}_{i}\bm{\eta}_0=\bm{0}, \,\,{\rm for}\,\, i=1,2,\cdots,l, ,\,{\rm while}\,\,\bm{{\rm Y}}_{l+1}-\bm{\Phi}_{l+1}\bm{\eta}_0\neq \bm{0}.$

Since $\bm{\eta}_r $ is the least square solution of Eq. \eqref{EqB.2}, we have
\begin{equation}\label{EqB.3.1}
\|\bm{{\rm Y}}-\bm{\Lambda}_r\bm{\Phi}\bm{\eta}_r\|^2\leq
\|\bm{{\rm Y}}-\bm{\Lambda}_r\bm{\Phi}\bm{\eta}_0\|^2,
\end{equation}

It follows from \eqref{EqB.3} and \eqref{EqB.3.1} that
\begin{equation*}
r^{2(i-2)}\|\bm{\Phi}_{i}{\bm\eta}_r \|^2 \leq
r^{2(l-1)}\|\bm{\Phi}_{l+1}{\bm\eta}_0 \|^2
+ \cdots
+ r^{2(m-2)}\|\bm{\Phi}_{m}{\bm\eta} \|^2, \forall \, i= 1,2,\cdots,l+1.
\end{equation*}

This implies there exists a constant $C>0$ such that for any small enough $r$, we have
\begin{equation}\label{EqB.4}
\|\bm{\Phi}_{i}{\bm\eta}_r \|^2 \leq
C\|\bm{\Phi}_{l+1}{\bm\eta}_0 \|^2, \,\,{\rm for}\,\, i=1,2,\cdots,l+1.
\end{equation}

Let $\bm{\Phi}_{[l]} = \big[\bm{\Phi}_{1}^{\rm T}, \cdots, \bm{\Phi}_{l}^{\rm T}\big]^{\rm T}$, $\bm{Y}_{[l]} =[\bm{Y}_{1}^{\rm T}, \cdots, \bm{Y}_{l}^{\rm T}]^{\rm T}$. Since $\bm{\Phi}_{[l]}{\bm\eta} = \bm{Y}_{[l]}$ admits a solution, $\bm{\Phi}_{[l+1]}{\bm\eta} = \bm{Y}_{[l+1]}$ has no solution, $\bm{\Phi}_{[l+1]}$ is a full rank matrix.

Select the submatrix $\tilde{\bm{\Phi}}$ from $\bm{\Phi}_{[l+1]}$ such that $\bm{\tilde{\Phi}}$ is a nonsingular square matrix, and select the corresponding rows from $\bm{Y}_{[l+1]}$ to form $\bm{\tilde{Y}}$. It easily follows that ${\bm\eta}_r=\bm{\tilde{\Phi}}^{-1} \bm{\tilde{Y}}$.

Thus
\begin{equation}\label{EqB.5}
\|{\bm\eta}_r\|\leq\|\bm{\tilde{\Phi}}^{-1}\|\| \bm{\tilde{Y}}\|
\end{equation}

By Eq. \eqref{EqB.4}, $\|\bm{Y}_{[l+1]}\|= \|\bm{\Phi}_{[l+1]}{\bm\eta}_r\|$ is bounded, which implies the right-hand side of Eq. \eqref{EqB.5} is bounded.  Hence, assertion 1) follows.

\medskip
2) Next, let $\bm{\eta}_n=\bm{\eta}_{r_n}$, and $\{\bm{\eta}_n\}_{n=1}^{\infty}$ be any convergent subsequence of $\bm{\eta}_{r}$, we examine that if $\bm{\eta}=\lim\limits_{n\rightarrow\infty} {\bm\eta}_{n}$, then $\bm{\eta}$ is an approximation of Laplacian $\Delta$ with the largest order.

We argue this by contradiction. Let $\bm{\eta}_0$ be an approximation of Laplacian $\Delta$ with the largest order of $l$, suppose $\bm{\eta}$ is an approximation of $\Delta$ with order of $s\,(s<l)$.

By \eqref{EqB.3}, we have
\begin{eqnarray}
\|\bm{{\rm Y}}-\bm{\Lambda}_r\bm{\Phi}\bm{\eta}\|^2 &=&
r^{2(s-1)}\|\bm{\Phi}_{s+1}{\bm\eta} \|^2
+ \cdots
+ r^{2(m-2)}\|\bm{\Phi}_{m}{\bm\eta} \|^2,  \label{EqB.6}\\
\|\bm{{\rm Y}}-\bm{\Lambda}_r\bm{\Phi}\bm{\eta}_0\|^2 &=&
 r^{2(l-1)}\|\bm{\Phi}_{l+1}{\bm\eta}_0 \|^2
+ \cdots
+ r^{2(m-2)}\|\bm{\Phi}_{m}{\bm\eta}_0 \|^2.  \label{EqB.7}
\end{eqnarray}
\quad Noting \eqref{EqB.6}, \eqref{EqB.7} and $\bm{\eta}=\lim\limits_{n\rightarrow\infty} {\bm\eta}_{n}$, there exists constants $\delta>0, \epsilon>0$ and $M>0$ such that
\begin{eqnarray*}
\|\bm{{\rm Y}}-\bm{\Lambda}_{r_n}\bm{\Phi}\bm{\eta}_n\|^2 \geq r_n^{2(s-1)}\|\bm{\Phi}_{s+1}{\bm\eta}_n \|^2
&>& \frac{r_n^{2(s-1)}}{2}\|\bm{\Phi}_{s+1}{\bm\eta} \|^2  \\
&\geq& \varepsilon r_n^{2(s-1)}
\geq Mr_n^{2(l-1)} \geq \|\bm{{\rm Y}}-\bm{\Lambda}_{r_n}\bm{\Phi}\bm{\eta}_0\|^2,\\
&&
\,\,{\rm for}\,\,n \,\,{\rm large}\,\,{\rm enough}.
\end{eqnarray*}
\quad This contradicts to the fact that $\bm{\eta}_n$ is the least square solution of Eq. \eqref{EqB.2}.

\end{proof}

\subsection*{B. Training setup}

\subsubsection*{B1. Setup of PIGNN}
\noindent{\textbf{\\}}

The specific settings for each component of our PIGNN are as follows. In ENCODER, two MLPs are used to encode node features and edge features, respectively. PROCESSOR is composed of $L$ GN blocks with the same structure, where $L$ is set to 1 by default. Two MLPs are used as update functions for internal nodes features and edges features. DECODER is also implemented with a MLP. All these MLPs contain two hidden layers, each with 128 rectified linear unit (ReLU) activation units. By default, we set the time interval $\Delta t=10^{-3}$, the chosen computational domain is $[0,1]\times[0,1]$ and is discretized by a $100\times 100$ irregular mesh, for a total of 15681 nodes. We use the python API of FEniCS to automatically create the irregular triangular mesh over the computational domain. Fig. \ref{fig22} shows an example of an irregular mesh on a regular domain $[0,1]\times[0,1]$ with a mesh width $\delta{\rm x}=0.1$ (or mesh density 10) generated by FEniCS. By default, our model is trained for only 10 time steps using the Adam (Adaptive moment estimation) optimizer with an exponential learning rate decay strategy from $10^{-3}$ to $10^{-7}$ every 10 epochs by 1\%.

\begin{figure}[htbp]
\begin{center}
\scalebox{0.4}[0.4]{\includegraphics{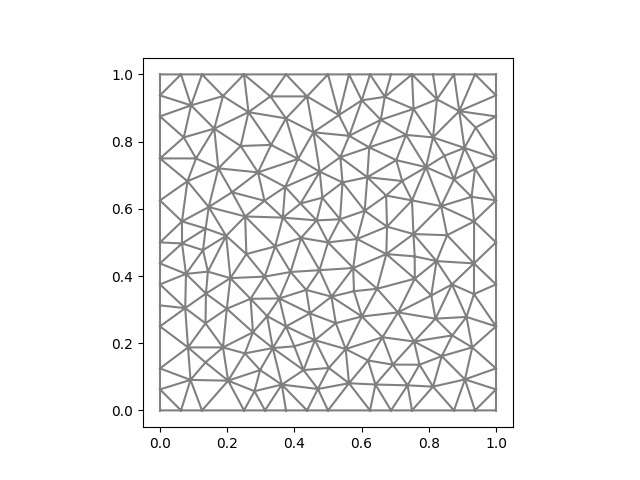}}
\caption{\footnotesize{An irregular mesh (mesh density is 10) on a regular domain $[0,1]\times[0,1]$ generated by FEniCS.}}
\label{fig22}
\end{center}
\end{figure}

\subsubsection*{B2. Setup of Baseline models}
\noindent{\textbf{\\}}

With the help of DEEPXDE \cite{Lu2021} library, we built the PINN baseline model. For a fair comparison, PINN's MLP architecture contains four hidden layers with 20 Tanh activation units each. The loss function is calculated by randomly selecting collocation points from both spatial and time spaces, where the number of collocation points is approximately equal to the number of mesh points multiplied by the total training time steps. When training, the computational domain and the time domain are the same as those in our approach. We train the model in two stages. In the first stage, the network is trained by the Adam optimizer with a learning rate of $10^{-3}$ for $6\times10^{4}$ epochs, and then in the second stage, it is trained by the L-BFGS (Limited-memory BFGS) optimizer for additional $4\times10^{4}$ epochs.

\subsubsection*{B3. Selection of hyper-parameters}
\noindent{\textbf{\\}}

An appropriate selection of hyper-parameters contributes to the success of model training. In this subsection, we introduce the criteria for selecting hyper-parameters.
\begin{itemize}
\item[$\bullet$] $\gamma$: This parameter balances the data loss and residual physics loss for network training in the inverse problem to help the network identify unknown PDE parameters. In practice, there is an order of magnitude difference between residual loss and data loss at the beginning of network training. Therefore, the initial value of $\gamma$ is set large (e.g., 0.9-0.95) to reduce the magnitude difference while allowing the network to mainly learn the dynamics of PDEs during the initial training period. As the training process proceeds, we gradually reduce $\gamma$ to 0.5 so that the network can learn more from data.
\smallskip

\item[$\bullet$] $\Delta t$: This parameter represents the time interval of the time space discretization. By default, we set it to $10^{-3}$. We use this setup in all experiments of Burgers and FN equations. For the heat equation, we set $\Delta t=10^{-4}$ since we are unable to successfully train the model using the default settings. Fortunately, numerical studies have provided us with a stability criterion for the spatialtemporal resolution. According to this criterion, the magnitude of $\Delta t$ is set to approximately the square of the spatial resolution, namely, $\Delta t \sim ({\delta x})^2$, where ${\delta x}$ is the mesh width which in our experiments is set to $10^{-2}$.
\smallskip

\item[$\bullet$] Training time steps: This parameter represents the number of time steps for network training. By default, we set it to 10, and use this setting in all experiments of Burgers and FN equations. For the heat equation, we set it to 50 because of its relatively small time interval. We set it larger to allow the model to learn more information over a longer evolutionary period.
\smallskip

\item[$\bullet$] Maximum number of training epochs: Our principle for setting the number of training epochs is that the model should be sufficiently trained. Specifically, for different equations, we adaptively adjust the concrete value according to the complexity of the problem. Generally, the number of training epochs for the Adam optimizer is about $6\times10^4 \sim 7\times10^4$, while for the L-BFGS optimizer, $3\times10^4\sim4\times10^4$ training epochs are used.
\end{itemize}

\subsection*{C. PDE examples}
\noindent{\textbf{\\}}

The specific expressions of PDE examples are as follows.

1) 2D Heat equation
\begin{eqnarray}\label{Eq4.3}
\frac{\partial u(x,y,t)}{\partial t} &=& \Delta u(x,y,t) + f(x,y,t), \\
f(x,y,t) &=& a\cos[b\pi(x-d)t + cy^2][b\pi(x-d) - 2c]  \nonumber \\
&&
\quad + a\sin[b\pi(x-d)t + cy^2][b^2\pi^2 t^2 + 4 c^2y^2], \nonumber\\
\Delta u(x,y,t) &=&  \frac{\partial^2 u(x,y,t)}{\partial x^2} + \frac{\partial^2 u(x,y,t)}{\partial y^2}, \nonumber\\
(x,y)\in\Omega &=& [0,1] \times [0,1],\,\, t>0, \nonumber
\end{eqnarray}

subject to the initial and boundary conditions
\begin{eqnarray*}
u(x,y,0) &=& a\sin(cy^2), \qquad\qquad\qquad\,\,\, (x,y)\in\Omega,\\
u(x,y,t) &=& a\sin[b\pi(x-d)t + cy^2],\quad (x,y)\in\partial \Omega,\, t>0.
\end{eqnarray*}

The analytical solution is
\begin{equation}\label{Eq4.4}
u(x,y,t) = a\sin[b\pi(x-d)t + cy^2],
\end{equation}

where $a,\,b,\,c,\,d = 4,\,2,\,0.5,\,0.3$.

2) 2D Burgers equation
\begin{eqnarray}
\frac{\partial u}{\partial t} + u\frac{\partial u}{\partial x} + v\frac{\partial u}{\partial y}
&=& \frac{1}{R_e}\Delta u,   \label{Eq4.5} \\
\frac{\partial v}{\partial t} + u\frac{\partial v}{\partial x} + v\frac{\partial v}{\partial y}
&=& \frac{1}{R_e}\Delta v,  \label{Eq4.6}\\
(x,y)\in\Omega &=& [0,1] \times [0,1],\,\,  t>0, \nonumber
\end{eqnarray}
where $R_e=80$ is the Reynolds number, the analytical solutions below at $t=0$ specifies the initial conditions, and the boundary conditions are also specified by the following analytical solutions.
\begin{eqnarray}\label{Eq4.5}
u(x,y,t)&=&\frac{3}{4}- \frac{1}{4[1+\exp((-t-4x+4y)R_e/32)]},  \label{Eq4.7} \\
v(x,y,t)&=&\frac{3}{4}+ \frac{1}{4[1+\exp((-t-4x+4y)R_e/32)]}.  \label{Eq4.8}
\end{eqnarray}

3) 2D FitzHugh-Nagumo reaction-diffusion equation
\begin{eqnarray}
\frac{\partial u}{\partial t} &=&
\gamma_u\Delta u + u - u^3 - v + \alpha,   \label{Eq4.9} \\
\frac{\partial v}{\partial t} &=&
\gamma_v\Delta v + \beta(u-v),    \label{Eq4.10} \\
(x,y)\in\Omega &=& [0,1] \times [0,1],\,\,  t>0, \nonumber
\end{eqnarray}
where $\gamma_u = \gamma_v =0.001, \alpha=0, \beta=1.$

subject to the boundary conditions
\begin{equation*}
u(x,y,t) = 0, \,\, v(x,y,t)=0, \,\,(x,y)\in \partial\Omega, t>0,
\end{equation*}

and the initial conditions
\begin{eqnarray}\label{Eq4.6}
u(x,y,0)&=&\sin(b\pi x)\cos[a\pi(y-d)]e^{-c(x^2+y^2)},  \label{Eq4.11} \\
v(x,y,0)&=&\cos[a\pi(x-d)]\sin(b\pi y)e^{-c(x^2+y^2)},  \label{Eq4.12}
\end{eqnarray}
where $a,\,b,\,c,\, d = 1,\,1,\,6,\,0$.

\end{document}